\theoremstyle{definition}
\newtheorem{definition}{Definition}
\newtheorem{example}[definition]{Example}
\newtheorem{remark}[definition]{Remark}
\theoremstyle{plain}
\newtheorem{lemma}[definition]{Lemma}
\newtheorem{proposition}[definition]{Proposition}
\newtheorem{theorem}[definition]{Theorem}
\newtheorem{corollary}[definition]{Corollary}
\newcommand{\tpmod}[1]{{\@displayfalse\pmod{#1}}}
\newcommand\Ni[1][S]{\mathfrak{I}_{0}(#1)}
\newcommand{\Ap}{\operatorname{Ap}}
\newcommand{\Cl}{\mathcal{C}\!\ell}
\newcommand{\Hm}{H_m}
\newcommand{\Hsm}{H_{m+1}}
\newcommand{\IOm}{\mathfrak{I}_{0}(\Hm)}
\newcommand{\IOsm}{\mathfrak{I}_{0}(\Hsm)}
\def\@bignumber#1#2{%
  \ifx#2\end
    #1\let\next\@gobble
  \else
    #1\hspace{0pt plus 1pt}\let\next\@bignumber
  \fi
  \next#2}
\newcommand{\bignumber}[1]{\@bignumber#1\end}
\title[Lattice ideal class monoid]{When the poset of the ideal class monoid of a numerical semigroup is a lattice}
\keywords{ideal class monoid, numerical semigroup, poset, lattice}
\subjclass[2020]{20M14, 20M12}
\author{S. Bonzio}
\address{Departament of Mathematics and Computer Science, University of Cagliari, via Ospedale 72, 09124 Cagliari, Italy}
\email{stefano.bonzio@unica.it}
\author{P. A. García-Sánchez}
\address{Departamento de \'Algebra and IMAG, Universidad de Granada, E-18071 Granada, Espa\~na}
\email{pedro@ugr.es}
\begin{document}

\begin{abstract}
    We characterize numerical semigroups for which the poset of its ideal class monoid is a lattice, and study the irreducible elements of such a lattice with respect to union, intersection, infimum and supremum.
\end{abstract}

\maketitle

\section{Introduction}

A \emph{numerical semigroup} is a co-finite submonoid of the monoid of non-negative integers under addition, denoted by $\mathbb{N}$ in this manuscript. The co-finite condition is equivalent to saying that the greatest common divisor of the elements of the semigroup is one. 

A (relative) \emph{ideal} of $S$ is a subset $I$ of integers such that $I+S=I$ and $z+I\subseteq S$ for some integer $z$. This last condition is equivalent to the existence of $\min(I)$. An ideal $I$ is said to be \emph{normalized} if $\min(I)=0$. The set of normalized ideals of $S$ is denoted by $\Ni$.

If $I$ and $J$ are ideals of a numerical semigroup $S$, then $I+J=\{i+j : i\in I, j\in J\}$ is also an ideal of $S$ (see \cite[Chapter~3]{ns-app} for the basic properties of ideals of numerical semigroups). The set of ideals of $S$ under this operation is a monoid, and its identity element is $S$. Notice that $(\Ni,+)$ is a submonoid of this monoid.

Observe that if $I$ is an ideal of $S$ and $z\in \mathbb{Z}$, then $z+I=(z+S)+I$ is also an ideal of $S$. For $I$ and $J$ ideals of $S$, we write $I\sim J$ if $I=z+J$ for some $z\in \mathbb{Z}$. We denote by $\Cl(S)$ the quotient of the set of ideals of $S$ by the binary relation $\sim$, and let $[I]$ be the equivalence class of $I$ under $\sim$. We may think of $\Cl(S)$ as the set of ideals modulo principal ideals. 

Given $I$ and $J$ ideals of $S$, it is easy to see that $[I]+[J]=[I+J]$ is well defined, and so $(\Cl(S),+)$ is a monoid, which is isomorphic to $(\Ni,+)$, via the isomorphism $[I]\mapsto -\min(I)+I$ (see for instance \cite{apery-icm}).
If $S$ and $T$ are numerical semigroups such that $(\Cl(S),+)$ and $(\Cl(T),+)$ are isomorphic, then $S$ and $T$ must be equal \cite{isom-icm}. The set $\Cl(S)$ was introduced in \cite{b-k}, where the firsts bounds on its cardinality were given together with some properties related to reduction number of its elements and the generators of $(\Cl(S),+)$.

If $I$ and $J$ are normalized ideals of a numerical semigroup $S$, and there exists $K\in \Ni$ such that $I+K=J$, then we write $I\preceq J$. The binary relation $\preceq$ is an order relation on $\Ni$, and thus $(\Ni,\preceq)$ is a poset. In the same way, we write $[I]\preceq [J]$ if there exists an ideal $K$ of $S$ such that $[I]+[K]=[J]$. In this way, $(\Cl(S),\preceq)$ becomes a poset isomorphic to $(\Ni,\preceq)$. In \cite{m-3-poni}, the poset $(\Ni,\preceq)$ is studied for $S$ a numerical semigroup of embedding dimension three, and it is shown that it is always a lattice. 

One of the goals of this manuscript is to characterize those numerical semigroups $S$ for which $(\Ni,\preceq)$ is a lattice (Section~\ref{sec:lattice}); the other is the study of irreducible elements with respect to the different operations defined on $\Ni$, when $(\Ni,\preceq)$ is a lattice (Section~\ref{sec:irred}).

The reader will notice that some computations with specific numerical semigroups and ideals are omitted. These computations can be found in the file \emph{lattice-normalized-ideals-ns.ipynb}
on the repository

\centerline{\emph{https://github.com/numerical-semigroups/ideal-class-monoid}}

\noindent and were performed with the help of the \texttt{GAP} \cite{gap} package \texttt{numericalsgps} \cite{numericalsgps}.

\section{Preliminaries}

\subsection{Numerical Semigroups}
Let $S$ be a numerical semigroup. The set $\mathbb{N}\setminus S$ is known as the set of gaps of $S$, denote here by $\operatorname{G}(S)$, and its cardinality is the genus of $S$. 
The largest integer not belonging to $S$ is known as the \emph{Frobenius number} of $S$, which we denote by $\operatorname{F}(S)$. 
The \emph{conductor} of $S$ is the least integer $c$ such that $c+\mathbb{N}\subseteq S$. We will use $\operatorname{C}(S)$ to refer to the conductor of $S$; it follows easily that $\operatorname{C}(S)=\operatorname{F}(S)+1$. The least positive integer belonging to $S$ is known as the \emph{multiplicity} of $S$, denoted by $\operatorname{m}(S)$.

Given $A\subseteq \mathbb{N}$ the smallest submonoid of $\mathbb{N}$ that contains $A$ is 
\[
\langle A\rangle = \left\{
\sum\nolimits_{i=1}^n a_i : n\in \mathbb{N}, a_1,\dots,a_n\in A\right\}.
\]
Clearly, if $S$ is a numerical semigroup, then $\langle S\rangle =S$. If $A\subseteq S$ is such that $\langle A\rangle =S$, then we say that $A$ is a set of generators of $S$, or simply that $A$ generates $S$. We say that $A$ is a minimal set of generators of $S$ if no proper subset of $A$ generates $S$. It is well known that $S$ admits a unique minimal set of generators $S^*\setminus (S^*+S^*)$, with $S^*=S\setminus \{0\}$ (see for instance \cite[Corollary~2]{ns-app}), moreover this set cannot have two elements congruent modulo $\operatorname{m}(S)$, and so the cardinality of the minimal set of generators, known as the embedding dimension of $S$, is always smaller than the multiplicity of $S$. The elements of $S^*\setminus (S^*+S^*)$ are known as minimal generators of $S$. If follows easily that $s\in S$ is a minimal generator if and only if $S\setminus \{s\}$ is a numerical semigroup.

Associated to a numerical semigroup $S$ one can define the order induced (on $\mathbb{Z}$) by $S$ as $a\le_S b$ if $b-a\in S$, for any $a,b\in\mathbb{Z}$. 

If $X$ is a finite set of integers, then $X+S$ is an ideal of $S$. If $X=\{x\}$ for some $x\in \mathbb{Z}$, then we write $x+S$ instead of $\{x\}+S$. Notice that every ideal $I$ of $S$ is of this form, since it suffices to take $X=\operatorname{Minimals}_{\le_S}(I)$, which is know as a \emph{minimal generating set} of $I$, and its elements are called \emph{minimal generators} of $I$. There cannot be two different minimal generators congruent modulo the multiplicity of $S$. In particular, the minimal generating set of an ideal $I$ of $S$ has at most cardinality $\operatorname{m}(S)$.

The union of two ideals of a numerical semigroup is again an ideal, and the same holds for the intersection (see \cite[Chapter~3]{ns-app} for an introduction of the basic properties of ideals of numerical semigroups). 

\subsection{Posets and lattices}\label{sec:posets}

A poset $(P,\leq)$ is a set equipped with a (partial) order relation ($\leq$). Given two elements $a,b$ in a poset $(P,\leq)$, $b$ is a \emph{cover} of $a$ if $a < b$ (that is $a\leq b$ and $a\neq b$) and for every $c$ such that $a\leq c\leq b$, then either $c = a$ or $c = b$. %We will often refer to a cover of an element $a\in P$ as $a^{c}$.

A poset $(P,\leq)$ is a lattice if, for every pair of elements $x,y\in P$, there exist the infimum and the supremum of $\{x,y\}$: in such a case they are (binary) operations on the set $P$ usually denoted by $x\wedge y$ and $x\vee y$, and referred to as \emph{meet} and \emph{join}, respectively. A lattice can be equivalently defined as a set equipped with two idempotent, associative, commutative and absorbptive operations $\wedge $ and $\vee$. %In a lattice $L$, meets and joins of an arbitrary subset $X\subseteq L$ do not necessarily exist; if they do for every subset $X\subseteq L$ then $L$ is a \emph{complete} lattice. The supremum and infimum of an arbitrary set $X$ (if they exist) are sometimes indicated with $\bigwedge X$ and $\bigvee X$, respectively. Obviously, every finite lattice is a complete lattice. 

A \emph{meet semilattice} $(S,\wedge)$ is a set $S$ with an associative, commutative and idempotent operation $\wedge$. On every meet semilattice one can define the partial order relation $x\leq y$ if  $x\wedge y = x$. In the case the order $\leq$ has a maximum element 1, then $(S,\wedge)$ is called a meet semilattice with one (1 is, equivalently, the neutral element for the operation $\wedge$). Similarly $(S,\vee)$ is a \emph{join semilattice} if $\vee $ is an associative, commutative and idempotent operation on $S$; in this case, we can define the partial order relation $x\le y$ if $x\vee y = y$. If the induced order $\leq$ has a minimum element $0$, then $(S,\vee)$ is a join semilattice with zero ($0$ is the neutral element for $\vee$). In a join (respectively meet) semilattice the element $x\vee y$ is the supremum (respectively infimum) of the set $\{x,y\}$ with respect to the induced order $\leq$.  

Given a poset $(P,\leq)$ and $X\subseteq P$, we indicate by $\uparrow\!X$ and $\downarrow\!X$ the set of upper and lower bounds, respectively, of $X$, namely $\uparrow\!X =\{a\in P  :  x\leq a, \text{ for every } x\in X\}$ and $\downarrow\!X =\{a\in P  :  a\leq x, \text{ for every } x\in X\}$. For $X = \{x\}$, we will write $\uparrow\!x$ and $\downarrow\!x$ (instead of $\uparrow\!\{x\}$ and $\downarrow\!\{x\}$, respectively). 

Every finite meet semilattice (with one) or join semilattice (with zero) is indeed a lattice by \cite[Theorem 2.4]{Nation}. 

Let $(P,\le)$ be a lattice, and let $p\in P$. We say that $p$ is $\wedge$-\emph{irreducible} if it cannot be expressed as $p=q\wedge r$ with $q,r\in P\setminus\{p\}$. Analogously, we say that $p$ is $\vee$-\emph{irreducible} if there is no $q,r\in P\setminus\{p\}$ such that $p=q\vee r$. 

The next result characterizes irreducibles in terms of covers.

\begin{proposition}\label{prop:irreducible-unique-cover}
    Let $(P,\le)$ be a finite lattice, and let $p\in P$. Then, $p$ is $\wedge$-irreducible if and only if $p$ has at most one cover. Dually, $p$ is $\vee$-irreducible if and only if $p$ covers at most one element in $P$.
\end{proposition}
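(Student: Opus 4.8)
The plan is to prove both characterizations by contraposition, relying on the fact that in a finite poset covers always exist below any strict upper bound. First I would record the auxiliary observation that if $a<b$ in a finite poset, then some element $c$ covers $a$ with $c\le b$: the set $\{x : a<x\le b\}$ is finite and nonempty (it contains $b$), so it has a minimal element $c$, and minimality forces $c$ to be a cover of $a$. This is the only place where finiteness is genuinely used.

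For the $\wedge$-part I would prove the contrapositive in both directions, i.e.\ that $p$ is $\wedge$-reducible if and only if $p$ has at least two covers. Suppose first that $p$ has two distinct covers $q_1,q_2$. Since $p\le q_1$ and $p\le q_2$, we get $p\le q_1\wedge q_2\le q_1$. Because $q_1$ covers $p$, either $q_1\wedge q_2=p$ or $q_1\wedge q_2=q_1$; the latter would give $q_1\le q_2$, and since $q_2$ covers $p$ with $p<q_1\le q_2$ this forces $q_1=q_2$, a contradiction. Hence $q_1\wedge q_2=p$ with $q_1,q_2\ne p$, so $p$ is $\wedge$-reducible.

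Conversely, suppose $p=q\wedge r$ with $q,r\ne p$. Then $p<q$ and $p<r$, and the auxiliary observation yields covers $q'$ and $r'$ of $p$ with $q'\le q$ and $r'\le r$. If $q'=r'$, then $q'\le q\wedge r=p$, contradicting $p<q'$; hence $q'\ne r'$ and $p$ has at least two covers. Combining the two implications gives the $\wedge$-statement.

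The $\vee$-statement follows by the order-dual argument: replace $\wedge$ by $\vee$, covers by covered elements, and reverse all inequalities (the auxiliary observation dualizes to the existence of an element covered by $a$ that lies above any strict lower bound of $a$). I expect no real obstacle in the argument; the only subtle point is the appeal to finiteness to guarantee that covers exist, which is precisely where the hypothesis that $(P,\le)$ is finite enters.
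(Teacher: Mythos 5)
Your proof is correct and follows essentially the same route as the paper: the two-covers-implies-reducible computation with $q_1\wedge q_2$ is identical to the paper's, and your auxiliary observation (a cover of $a$ exists below any $b>a$ in a finite poset) is exactly the finiteness fact the paper invokes implicitly when it passes from ``$q$ is the unique cover of $p$ and $p\le r$'' to ``$q\le r$''. Stating that observation explicitly is a small gain in rigor, but the argument is the same.
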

\begin{proof}
    Suppose that $p$ is $\wedge$-irreducible and that $q$ and $r$ are covers of $p$, with $q\neq r$. Then, $p\le q\wedge r\le q$. As $q$ covers $p$, either $p=q\wedge r$ or $q\wedge r=q$. The first case is not possible, since $p$ is irreducible, while the second leads to $q=q\wedge r\le r$, which cannot be possible since then $p\le q\le r$, and then either $q=r$ or $q=p$ ($r$ is a cover of $p$). 

    For the other implication, if $p$ has no covers, then, since $P$ is finite, $\uparrow\!p=\{p\}$, and this forces $p$ to be $\wedge$-irreducible. Suppose that $q$ is the unique cover of $p$. If $p=r\wedge s$ with $r,s\in P\setminus\{p\}$, then $p\le r$ and $p\le s$, which yields $q\le r$ and $q\le s$, since $q$ is the unique cover of $p$. In particular, $q\le r\wedge s$, leading to $r\wedge s= p< q\le r\wedge s$, a contradiction. 
\end{proof}

\section{Normalized ideals of numerical semigroups}\label{sec: normalized ideals}

Recall that a relative ideal $I$ of a numerical semigroup $S$ is a \emph{normalized} ideal $I$ if $\min(I) = 0$. The set $\mathcal{I}_{0}(S)$ of normalized ideals of $S$ is always finite and forms a (complete) lattice under the operations of $\cap$ and $\cup$ (see \cite{apery-icm}). Moreover, it can be turned into a poset upon considering the following partial order relation: 

\centerline{$I\preceq J$ if there exists $L\in\mathcal{I}_{0}(S)$ such that $I + L = J$.}

\vspace{0.2cm}
We will write $I\prec J$ if $I\preceq J$ and $I\neq J$ (see the paragraph preceding Remark~5.1 in \cite{apery-icm}).

\begin{remark}\label{rem:subset-contained-prec}
  Let $S$ be a numerical semigroup, and let $I,J\in \Ni$. If $I\preceq J$, then $J=I+K$ for some $K\in \Ni$. Thus, $I=I+0\subseteq I+K=J$, and consequently $I\subseteq J$. 
\end{remark}

The map $\Cl(S)\to \Ni$, $[I]\mapsto -\min(I)+I$ is a monoid isomorphism which preserves the order $\preceq$, and consequently the posets $(\Cl(S),\preceq)$ and $(\Ni,\preceq)$ are isomorphic. 

Let $m$ be the multiplicity of $S$. If $I$ is an ideal of $S$, and $x\in I$, then $x+ks\in I$ for every non-negative integer $k$. It follows that the set $\Ap(I)=\{ x\in I : x-m\not \in I\}$ generates $I$ as an ideal, and it has precisely $m$ elements, one per each congruence class modulo $m$. The set $\Ap(I)$ is known as the \emph{Apéry set} of $I$ (with respect to $m$). Thus, $\Ap(I)=\{w_0,w_1,\dots,w_{m-1}\}$, where $w_i=\min(I\cap (i+m\mathbb{N}))$.

As $I\subseteq \mathbb{N}$, we deduce that $\Ap(I)\subseteq \mathbb{N}$, and consequently for every $i\in\{0,\dots,m-1\}$, $w_i=m x_i +i$ for some non-negative integer $x_i$; $i=w_i \bmod m$, the remainder of the division of $w_i$ by $m$. The tuple $(x_1,\dots,x_{m-1})$ is known as the \emph{Kunz coordinates} of $I$ (see \cite[Section~4]{apery-icm}). Notice that we are omitting $x_0$, which is equal to $0$, as $0=\min(I)$. In the sequel we will write $I=(x_1,\dots,x_{m-1})_{\mathcal{K}}$ to denote that $(x_1,\dots,x_{m-1})$ are the Kunz coordinates of $I$.

It is well known (see \cite[Theorem~4.4]{apery-icm}) that $(x_1,\dots,x_{m-1})\in\mathbb{N}^{m-1}$ are the Kunz coordinates of an ideal $I$ of a numerical semigroup $S=(k_1,\dots,k_{m-1})_{\mathcal{K}}$ if and only if $(x_1,\dots,x_{m-1})$ fulfills the following inequalities
\begin{equation}
\label{eq:kunz-inequalities-general}
\begin{cases}
x_i\le k_i, \text{for all } i\in \{1,\dots,m-1\},\\
x_i+k_j+\lfloor \frac{i+j}{m}\rfloor \ge x_{(i+j)\bmod m}, \text{for all } i,j\in \{1,\dots,m-1\},
\end{cases}
\end{equation}
where $\lfloor q\rfloor=\max\{ z\in \mathbb{Z} : z\le q\}$ for every $q\in \mathbb{Q}$. % esto ya está arriba... We will often indicate an ideal $I$ using its Kunz coordinates $(x_1,\dots,x_{m-1})_{\mathcal{K}}$.

Let $n\in \mathbb{N}$, and let $i= n\bmod m$.
Then, $n=km+i$ for some $k\in\mathbb{N}$. We know that $w_i$ is the minimum element in $I$ congruent with $i$ modulo $m$. Hence, $n\in I$ if and only if $n\ge w_i$, or equivalently $k\ge x_i$. In particular, $n\not\in I$ if and only if $k\in \{0,\dots,x_i-1\}$, and this holds for every congruence class modulo $m$. Thus, the number of non-negative integers not belonging to $I$ is
\begin{equation}\label{eq:genus-ideal-kunz}
    |\mathbb{N}\setminus (x_1,\dots,x_{m-1})_{\mathcal{K}}| = x_1+\dots+x_{m-1}.
\end{equation}

With this in mind it is easy to show that if $J$ is an ideal with Kunz coordinates $(y_1,\dots,y_{m-1})_{\mathcal{K}}$, then
\begin{equation}\label{eq:inclusion-kunz-coord}
    I\subseteq J \text{ if and only if } (y_1,\dots,y_{m-1})\le (x_1,\ldots,x_{m-1})
\end{equation}
with respect to the usual partial order on $\mathbb{N}^{m-1}$, and
\begin{align*}
    (x_1,\dots,x_{m-1})_{\mathcal{K}}\cap (y_1,\dots,y_{m-1})_{\mathcal{K}} & =(\max(\{x_1,y_1\}),\dots,\max(\{x_{m-1},y_{m-1}\}))_{\mathcal{K}},\\
    (x_1,\dots,x_{m-1})_{\mathcal{K}}\cup (y_1,\dots,y_{m-1})_{\mathcal{K}} & =(\min(\{x_1,y_1\}),\dots,\min(\{x_{m-1},y_{m-1}\}))_{\mathcal{K}}.
\end{align*}
Addition requires more effort, but can be derived by translating \cite[Proposition~4.8]{apery-icm} to Kunz coordinates. If $I+J=(z_1,\dots,z_{m-1})_{\mathcal{K}}$, then for every $i\in \{1,\dots,m-1\}$
\begin{equation}\label{eq:kunz-coordinates-sum-general}
    z_i=\min(\{x_{i_1}+y_{i_2}+\lfloor \tfrac{i_1+i_2}m \rfloor : i_1,i_2\in \{0,\dots,m-1\}, i_1+i_2\equiv i \tpmod{m}\}).
    %\{ x_l+y_{i-l} : l\in \{1,\dots,i-1\} \} \cup \{ x_l+y_{m+i-l}+1 : l\in \{i+1,\dots,m-1\} \}),
\end{equation}

Let $I\in \Ni$. Then, $x\in I\setminus\{0\}$ is a minimal generator of $I$ if and only if $I\setminus\{x\}\in \Ni$ (see \cite[Lemma~9]{isom-icm}).

The following result relates the poset of normalized ideals of a numerical semigroup $S$ with that of $S\setminus\{a\}$ with $a$ a minimal generator of $S$ larger than the Frobenius number and the multiplicity of $S$ (illustrated in Figure~\ref{fig:evolution}). 

\begin{proposition}\label{prop:normalized-ideals-unitary-extension}
  Let $S$ be a numerical semigroup with multiplicity $m$ %Frobenius number $f$ 
  and Kunz coordinates $(k_1,\dots,k_{m-1})$. %\pedro{Las coordenadas son $(k_1,\dots,k_{m-1})$. La notación $(k_1,\dots,k_{m-1})_{\mathcal{K}}$ la usamos para decir que eso es un ideal con coordenadas $(k_1,\dots,k_{m-1})$} 
  Let $a$ be a minimal generator of $S$, and set $i=a\bmod m$. If $a>\max\{m,\operatorname{F}(S)\}$, then 
  \begin{itemize}
      \item $\Ni\subsetneq \Ni[S\setminus\{a\}]$; 
      %\item for every $I\in \Ni[S\setminus\{a\}]$, $I+S\in \Ni$; no lo necesitamos, es cierto para cualquier subconjunto de $\mathbb{N}$
      \item   $\Ni[S\setminus\{a\}]\setminus \Ni$ is the set of ideals of $\Ni[S\setminus\{a\}]$ not containing $a$, which is the set of ideals of $\Ni[S\setminus\{a\}]$ whose $i$th Kunz coordinate is $k_i+1$.
  \end{itemize}
 Moreover, for every $I=(x_1,\dots,x_{i-1},k_i+1,x_{i+1})_{\mathcal{K}}\in \Ni[S\setminus\{a\}]$, $I+S=I\cup\{a\}$, that is,
  \[(x_1,\dots,x_{i-1},k_{i}+1,x_{i+1},\dots,x_{m-1})_{\mathcal{K}}+S= (x_1,\dots,x_{i-1},k_i,x_{i+1},\dots,x_{m-1})_{\mathcal{K}}.\]
  In particular, $(x_1,\dots,x_{i-1},k_i,x_{i+1},\dots,x_{m-1})_{\mathcal{K}}$ covers $(x_1,\dots,x_{i-1},k_{i}+1,x_{i+1},\dots,x_{m-1})_{\mathcal{K}}$.
\end{proposition}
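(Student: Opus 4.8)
The plan is to realize $\Ni[S\setminus\{a\}]$, writing $T=S\setminus\{a\}$, as an explicit one–coordinate enlargement of $\Ni$ governed by a single family of inequalities, and then read off all four assertions. First I would record the structural facts about $T$. Since $a$ is a minimal generator congruent to $i$ modulo $m$ with $a>m$, the element $a$ must be the Apéry value $mk_i+i$ of $S$ in its class, because any larger element of that class is $mk_i+i$ plus a positive multiple of $m$ and hence decomposable. Removing $a$ changes only the class-$i$ Apéry value, from $a$ to $a+m$, so $T$ has multiplicity $m$ and Kunz coordinates $(k_1,\dots,k_{i-1},k_i+1,k_{i+1},\dots,k_{m-1})$. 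The hypothesis $a>\operatorname{F}(S)$ will enter through the bound it forces on the coordinates of $S$: since $\operatorname{F}(S)=\max_{j}(mk_j+j)-m$, the inequality $a>\operatorname{F}(S)$ gives $a+m>mk_j+j$ for every $j$, that is $m(k_i+1-k_j)>j-i>-m$, whence
\[
k_j\le k_i+1\qquad\text{for all } j\in\{1,\dots,m-1\}.
\]

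Next I would settle the inclusion and the description of the difference at the level of Kunz coordinates, and reduce what is left to one inequality. Any $I\in\Ni$ satisfies $I+T=I$ (as $T\subseteq S$ and $0\in T$) and $\min(I)=0$, so $I\in\Ni[T]$; thus $\Ni\subseteq\Ni[T]$. For $J=(x_1,\dots,x_{m-1})_{\mathcal{K}}\in\Ni[T]$ the element $a=mk_i+i$ lies in $J$ if and only if $k_i\ge x_i$, while the first inequality in \eqref{eq:kunz-inequalities-general} for $T$ gives $x_i\le k_i+1$; hence $a\notin J$ exactly when $x_i=k_i+1$. This identifies the ideals of $\Ni[T]$ not containing $a$ with those whose $i$th coordinate equals $k_i+1$, and since $T$ itself is such an ideal while $T\notin\Ni$ (its $i$th coordinate exceeds $k_i$), the inclusion is proper. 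The remaining claims are the equality $\Ni=\{J\in\Ni[T]:a\in J\}$ and the formula $I+S=I\cup\{a\}$. Writing $S=T\cup\{a\}$ and using $J+T=J$ gives $J+S=J\cup(a+J)$, so everything hinges on the inclusion $(a+J)\setminus\{a\}\subseteq J$: if it holds, then $a\in J$ yields $J+S=J$ and $a\notin J$ yields $J+S=J\cup\{a\}$, while conversely $J\in\Ni$ forces $a=0+a\in J+S=J$. Reducing $a+x\in J$ to the Apéry representative of the class of $x$, the class-$0$ case is immediate (adding a positive multiple of $m$ to $a$ lands in $J$ since $x_i\le k_i+1$), and a representative in class $j\in\{1,\dots,m-1\}$ gives exactly the inequality
\[
x_{j}+k_i+\lfloor\tfrac{i+j}{m}\rfloor\ \ge\ x_{(i+j)\bmod m},
\]
which is the instance of the second inequality in \eqref{eq:kunz-inequalities-general} for $S$ with second index $i$. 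These are the only $S$-inequalities not already guaranteed by the corresponding $T$-inequalities, since every other index carries the same Kunz coordinate in $S$ and in $T$.

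The heart of the argument, and the step I expect to be the main obstacle, is precisely this inequality: the matching $T$-inequality has $k_i+1$ in place of $k_i$ and therefore only delivers the conclusion up to an off-by-one, so the $T$-structure alone is not enough. I would argue by contradiction, and this is where $a>\operatorname{F}(S)$ is used decisively. Suppose the displayed inequality fails for some $j$ and set $\ell=(i+j)\bmod m$; then $\ell\ne i$, so $x_\ell\le k_\ell$ by \eqref{eq:kunz-inequalities-general} for $T$. Combining the failure with $x_\ell\le k_\ell$ and the bound $k_\ell\le k_i+1$ from the first paragraph forces $x_{j}=0$, no carry (so $\ell=i+j$), and $x_\ell=k_\ell=k_i+1$. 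But $k_\ell=k_i+1$ together with $\ell=i+j$ makes the Apéry value of $S$ in class $\ell$ equal to $mk_\ell+\ell=m(k_i+1)+(i+j)=a+m+j$, so $\operatorname{F}(S)\ge (mk_\ell+\ell)-m=a+j>a$, contradicting $a>\operatorname{F}(S)$. Hence the inequalities hold, which simultaneously proves $\Ni=\{J\in\Ni[T]:a\in J\}$ and $I+S=I\cup\{a\}$ for $I\in\Ni[T]\setminus\Ni$.

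Finally, the cover statement follows formally once $I+S=I\cup\{a\}$ is known. The map $I\mapsto I+S=I\cup\{a\}$ sends the ideal with $i$th coordinate $k_i+1$ to the one with $k_i$ in that position; since $S\in\Ni[T]$ (indeed $S+T=S$ and $\min(S)=0$), we have $I\preceq I\cup\{a\}$ and $I\ne I\cup\{a\}$. If $I\preceq K\preceq I\cup\{a\}$ with $K\in\Ni[T]$, then $I\subseteq K\subseteq I\cup\{a\}$ by Remark~\ref{rem:subset-contained-prec}, and as $(I\cup\{a\})\setminus I=\{a\}$ this leaves only $K=I$ or $K=I\cup\{a\}$. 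Thus $(x_1,\dots,k_i,\dots,x_{m-1})_{\mathcal{K}}$ covers $(x_1,\dots,k_i+1,\dots,x_{m-1})_{\mathcal{K}}$.
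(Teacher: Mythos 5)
Your proof is correct and establishes all four assertions; the overall skeleton (Kunz coordinates of $T=S\setminus\{a\}$, the inclusion $\Ni\subseteq\Ni[S\setminus\{a\}]$, the identification of the new ideals by the condition $x_i=k_i+1$, the sum formula, and the cover) coincides with the paper's, but your treatment of the central step is genuinely different. The paper disposes of the direction ``$a\in I$ implies $I\in\Ni$'' by the elementary remark that every element of $a+I$ is at least $a>\operatorname{F}(S)$ and hence lies in $S\subseteq I$, and then computes $I+S$ for $a\notin I$ by evaluating \eqref{eq:kunz-coordinates-sum-general} term by term. You instead reduce everything to the single family of inequalities $x_j+k_i+\lfloor (i+j)/m\rfloor\ge x_{(i+j)\bmod m}$ and prove it by contradiction, using Selmer's formula $\operatorname{F}(S)=\max_j(mk_j+j)-m$ to extract the bound $k_j\le k_i+1$; the forced configuration $x_j=0$, $i+j<m$, $x_{i+j}=k_{i+j}=k_i+1$ does indeed contradict $a>\operatorname{F}(S)$, so the argument is sound, and it has the virtue of pinpointing exactly which Kunz inequality of $S$ fails to be inherited from $T$. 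Be aware, though, that the step you single out as the main obstacle admits a two-line proof: for $x\in J\setminus\{0\}$ one has $a+x>a>\operatorname{F}(S)$, so $a+x\in S$, and $a+x\neq a$, so $a+x\in S\setminus\{a\}\subseteq J$; this yields $(a+J)\setminus\{a\}\subseteq J$ at once and collapses your third paragraph, which is essentially the shortcut the paper exploits.
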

\begin{proof}
  First, observe that the multiplicity of $S$ and $S'=S\setminus\{a\}$ is the same, and if $S'=(k_1',\dots,k_{m-1}')_{\mathcal{K}}$, then $k_j=k_j'$ for $i\neq j$, and $k_i'=k_i+1$.

  Notice that if $I\in \Ni$, then $I=I+0\subseteq I+S\setminus\{a\}\subseteq I+S=I$, and so $I\in \Ni[S\setminus\{a\}]$. Also, the $i$th Kunz coordinate of $S'$ is $k_i+1$, which by \eqref{eq:kunz-inequalities-general} means that $S'\not\in \Ni$. This proves that $\Ni\subsetneq \Ni[S\setminus\{a\}]$. 

  Next, we prove that $\Ni[S\setminus\{a\}]\setminus \Ni$ is the set of ideals of $\Ni[S\setminus\{a\}]$ not containing $a$. 
  If $I=(x_1,\dots,x_{m-1})_{\mathcal{K}}$ is an ideal of $\Ni[S\setminus\{a\}]$ and %$x_i=k_i+1$, 
  $a\not\in I$, then $x_i>k_i$ and by \eqref{eq:kunz-inequalities-general}, $I$ is not an ideal of $\Ni$. For the other inclusion, let $I\in \Ni[S\setminus\{a\}]\setminus\Ni$. If $a\in I$, then $S\subseteq I$ and $a+i\in S\subseteq I$ for all $i\in I$ (recall that $a>\operatorname{F}(S)$), which proves that $a+I\subseteq I$, and consequently $I+S=I+((S\setminus\{a\}\cup\{a\}) \subseteq I$, yielding $I\in \Ni$, a contradiction. Thus, $a\not\in I$.

  Now, we show that the ideals $I=(x_1,\dots,x_{m-1})_{\mathcal{K}}$ of $\Ni[S\setminus\{a\}]$ not containing $a$ are precisely those with $x_i=k_i+1$. If $I$ does not contain $a$, then $x_i > k_i$ (because $a=k_im+i\not\in I$), and as $I$ is an ideal of $S\setminus\{a\}$, by \eqref{eq:kunz-inequalities-general}, we have $x_i\le k'_i=k_i+1$. This forces $x_i=k_i+1$. The converse also holds: if $x_i=k_i+1$, then $I$ does not contain $a$.

  The semigroup $S$ is an ideal of $S\setminus\{a\}$ (it is an oversemigroup of $S\setminus\{a\}$, and thus it is an idempotent ideal of $S\setminus\{a\}$ by \cite[Proposition~5.14]{apery-icm}). 
  Let $I=(x_1,\dots,x_{m-1})_{\mathcal{K}}\in \Ni[S\setminus\{a\}]$ with its $i$th coordinate equal to $k_{i}+1$. By \eqref{eq:kunz-coordinates-sum-general}, for $j\neq i$, the $j$th coordinate of $I+S$ is the minimum of $\{x_j,k'_j\}\cup \{ x_l+k'_n : l+n\equiv j \pmod{m}, l+n<m \}\cup \{ x_j+k'_n+1 : l+n\equiv j \pmod{m}, l+n>m \}$. Notice that by \eqref{eq:kunz-inequalities-general}, this minimum is precisely $x_j$. The $i$th coordinate of $I+S$ is the minimum of $\{k_i+1,k_i\}\cup \{ x_l+k'_n : l+n\equiv i \pmod{m}, l+n<m \}\cup \{ x_l+k'_n+1 : l+n\equiv i \pmod{m}, l+n>m \}$. This minimum is $k_i$, and so $I+S=(x_1,\dots,x_{i-1},k_i,x_{i+1},\dots,x_{m-1})_{\mathcal{K}}$.
 It follows that $(I+S)\setminus I=\{k_im+i\}=\{a\}$. Thus, $I=(I+S)\setminus \{a\}$ and $I\cup\{a\}=I+S$. 

  As $I\cup\{a\}=(I+S)$, there cannot be another ideal $J$ with $I+S\prec J\prec I$, because that would lead to $I+S\subsetneq J\subsetneq I$, and $|(I+S)\setminus I|$ would be greater than two, a contradiction. This proves that $I+S$ covers $I$.
\end{proof}

\begin{figure}
  \raisebox{4.5em}{%
    \includegraphics[scale=0.25]{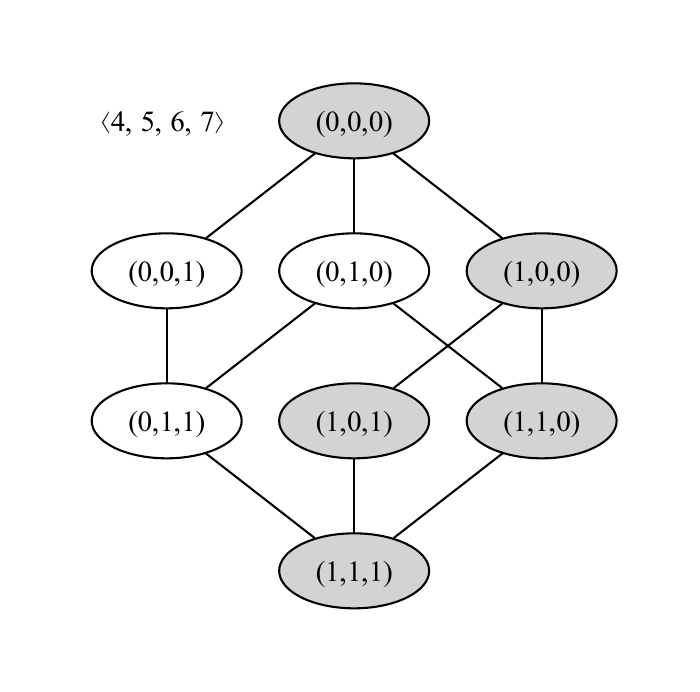} }
  \raisebox{3em}{%
    \includegraphics[scale=0.25]{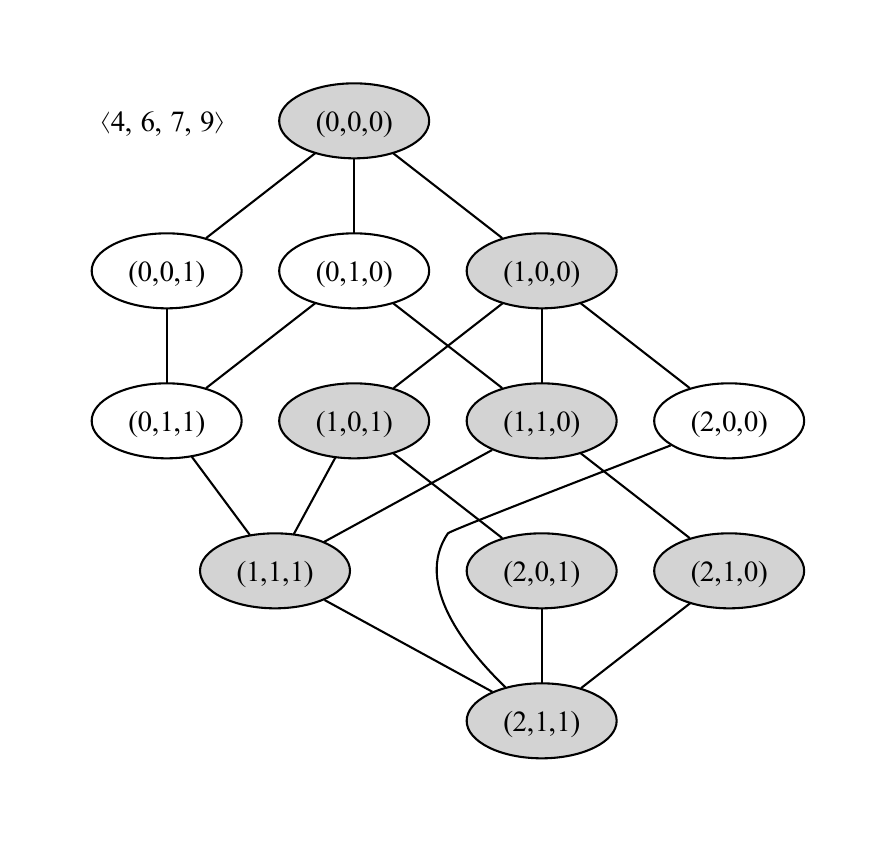} }
  \raisebox{1.5em}{%
    \includegraphics[scale=0.25]{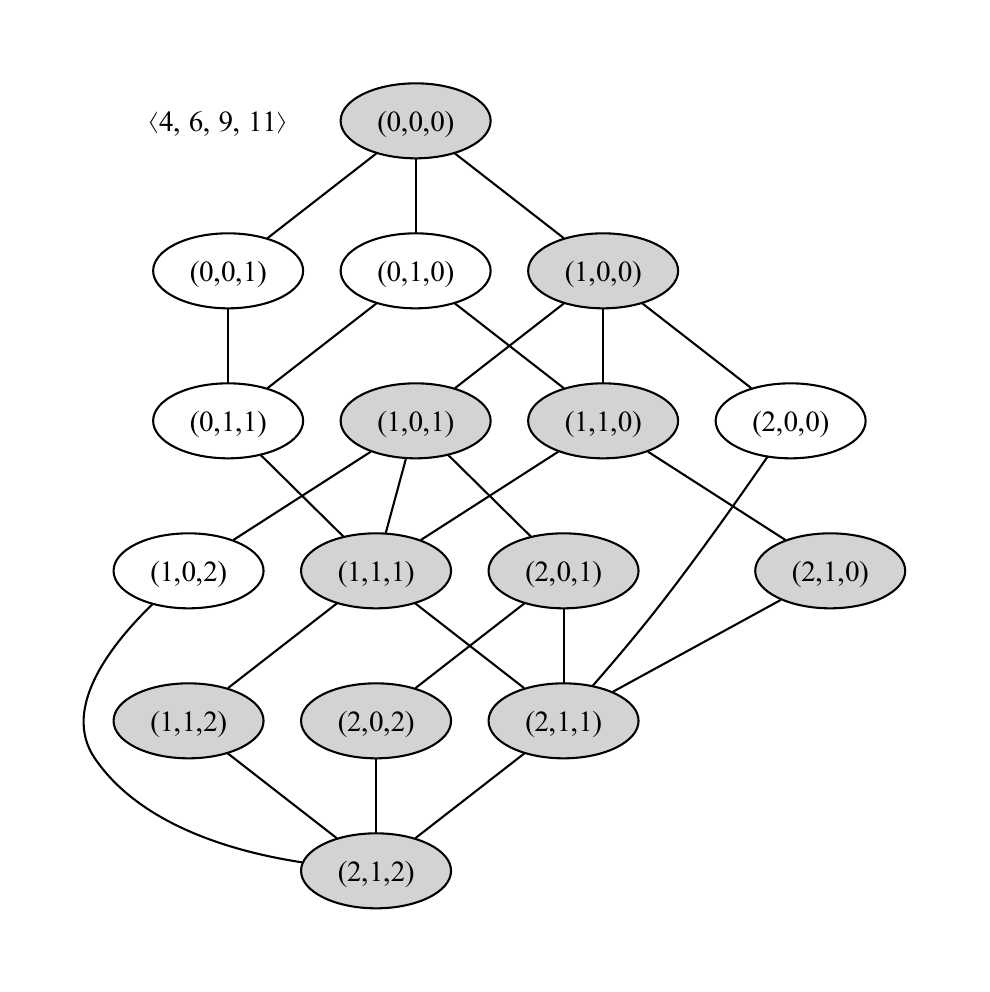} }
    \includegraphics[scale=0.25]{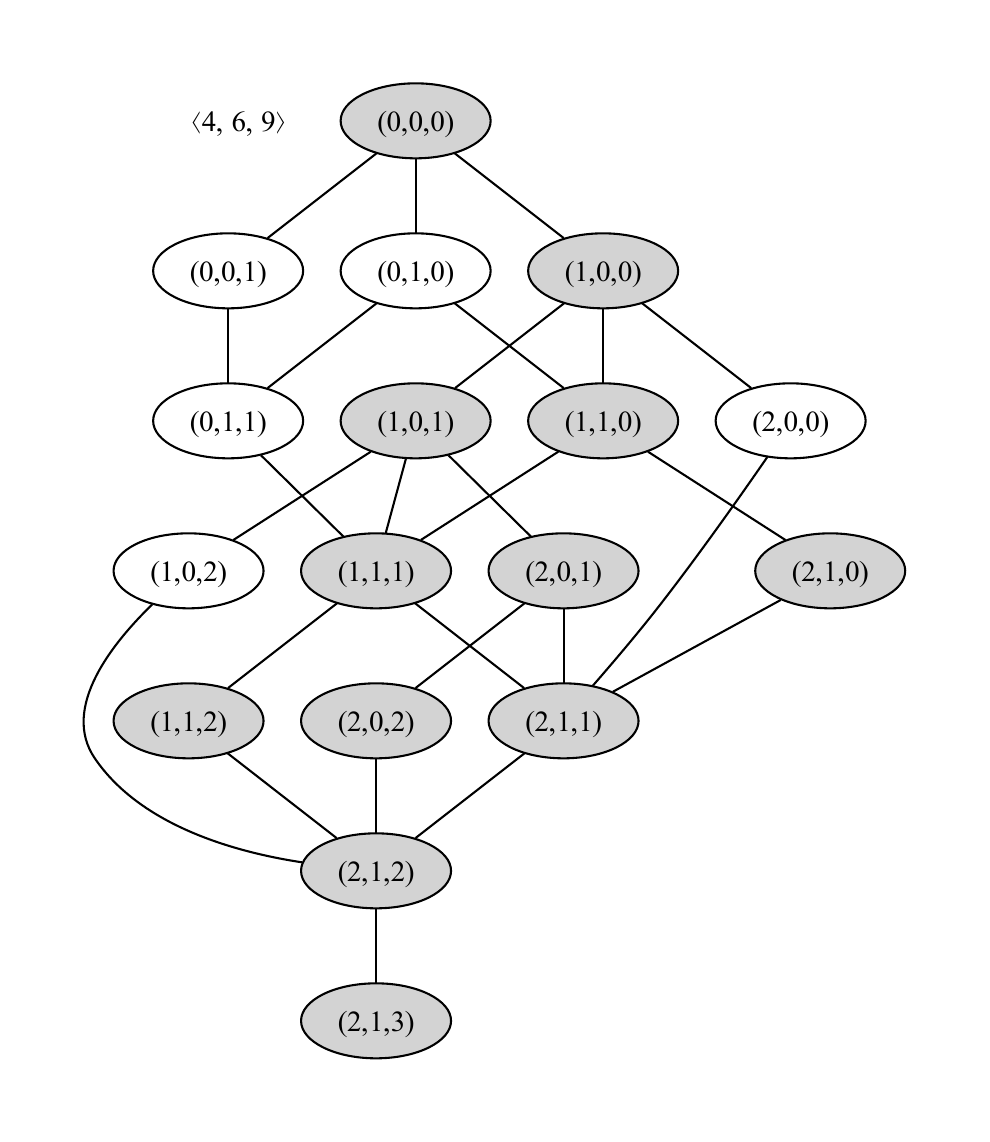}
    
\caption{Evolution of $\Ni$ after removing a minimal generator.}    
\label{fig:evolution}
\end{figure}

The following example shows that the condition $a>f$ in the previous result is necessary.

\begin{example}
    Let $S=\langle 3,19,23\rangle$, and let $a=19$. Then $i=1$. The set of  Kunz coordinates of the ideals in $\Ni\subset \Ni[S\setminus\{a\}]$ is 
    \[
    \{ ( 0, 6 ), ( 1, 7 ), ( 6, 0 ), ( 6, 1 ), ( 6, 2 ), ( 6, 3 ), ( 6, 4 ), ( 6, 5 ), ( 6, 6 ), ( 6, 7 )\}.
    \]
    The ideals not covered by Proposition~\ref{prop:normalized-ideals-unitary-extension} are $(0,6)_{\mathcal{K}}$ and $(1,7)_{\mathcal{K}}$.
\end{example}

\begin{remark}\label{rem:I+S-ideal-S}
    Let $S$ and $a$ be as in Proposition~\ref{prop:normalized-ideals-unitary-extension}. Then, for every $I\in \Ni[S\setminus\{a\}]$, we have $I+S\in \Ni$. If $I\in \Ni[S\setminus\{a\}]\setminus \Ni$, then by Proposition~\ref{prop:normalized-ideals-unitary-extension}, $I+S\in \Ni$, and if $I\in \Ni$, then $I+S=I\in \Ni$.
\end{remark}

\begin{remark}\label{rem: sobre ideales normalizados}
    Observe that in Proposition~\ref{prop:normalized-ideals-unitary-extension}, if $I$ is an ideal of $S\setminus\{a\}$ not containing $a$, then $I=(I+S)\setminus\{a\}$ and $I+S\in \Ni$. Also, if $J$ is another ideal of $S\setminus\{a\}$ and $I\preceq J$, then there exists $K\in \Ni[S\setminus\{a\}]$ such that $I+K=J$. Consequently, $I+S+K=I+S+K+S=J+S$. By Remark~\ref{rem:I+S-ideal-S}, $K+S\in \Ni$, and so $I+S\preceq J+S$ in $\Ni$. 
    If, in addition, $a\in J$, then $J$ is an ideal of $S$ and $I+S\preceq J$.
\end{remark}

Recall that a numerical semigroup $S$ is called \emph{ordinary} when its multiplicity $m$ coincides with the conductor of $S$, that is, $S = \mathbb{N}\setminus\{1,2,\dots, m-1\}$. In the remaining part of the manuscript, we will denote by $H_m$ the ordinary numerical semigroup with multiplicity $m$.

\begin{remark}\label{rem: numero de ideales de sem ordinario}
By \cite[Proposition 3.4]{apery-icm}, the set of ideals $\Ni$ of a numerical semigroup is in bijective correspondence with number of antichains in the poset $(\operatorname{G}(S), \leq_{S})$. For $H_m$, we have $\operatorname{G}(H_m)=\{1,2,\dots, m-1\}$ and therefore $|\mathfrak{I}_0(H_m)| = 2^{m-1}$. Moreover, if $I\in \IOm$ has Kunz coordinates $(x_1,\dots, x_{m-1})_{\mathcal{K}}$, then $x_i\in\{0,1\}$, for every $i\in\{1,\dots,m-1\}$, and $x_i = 0$ if and only if $i\in I$. In other words, the poset $(\IOm,\subseteq) $ is a Boolean algebra (for every $m\in\mathbb{N}$).
\end{remark}

\begin{remark}\label{rem: coordenadas de una suma-caso ordinario}
The equation \eqref{eq:kunz-coordinates-sum-general} expressing the Kunz coordinates of the sum of two ideals $I = (x_1,\dots, x_{m-1})_{\mathcal{K}}$, $J=(y_1 , \dots, y_{m-1})_{\mathcal{K}}$ can be significantly simplified when $S$ is ordinary. Indeed, if $I+J=(z_1,\dots,z_{m-1})_{\mathcal{K}}$, then for every $i\in \{1,\dots,m-1\}$,
\begin{equation}\label{eq: coordinadas de Kunz-suma de ideales-caso ordinario}
z_i=\min(\{x_{i_1}+y_{i_2} : i_1,i_2\in \{0,\dots,i\}, i_1+i_2 = i \}).    
\end{equation}
%Indeed, as $x_0 = y_0 = 0$, for every $i\in\{1,\dots,m-1\}$, $x_i + y_0 + \lfloor \tfrac{i+0}m \rfloor \leq 1\leq \lfloor \tfrac{i_1+i_2}m \rfloor$, for every $i_1 + i_2 > m$ (or, equivalently $x_0 + y_i + \lfloor \tfrac{0+i}m \rfloor$).
Notice that if $i_1,i_2\in \{0,\dots,m-1\}$ are such that $i_1+i_2=i+m$, then $\lfloor \frac{i_1+i_2}{m}\rfloor=1$, and consequently $x_i+y_0+\lfloor \frac{i+0}{m}\rfloor = x_i\le 1 = \lfloor \frac{i_1+i_2}{m}\rfloor \le x_{i_i}+y_{i_2}+ \lfloor \frac{i_1+i_2}{m}\rfloor$.
\end{remark}

The following result extends, in some sense, the content of Proposition~\ref{prop:normalized-ideals-unitary-extension} to the posets $\IOm$ and $\IOsm$. %of ordinary semigroups $S$ and $S\setminus\{m\}$ (with $m$ the multiplicity of $S$).

%\pedro{las coordenadas de un ordinario son todo unos}

\begin{proposition}\label{prop: ordinary I0(S) y I0(Smeno)}
 %Let $S$ be an ordinary numerical semigroup with multiplicity $m$. % and Kunz coordinates $(k_1,\dots,k_{m-1})_{\mathcal{K}}$. 
 For every $m\in\mathbb{N}$, the following facts hold.
 \begin{enumerate}
     \item $\IOm\subsetneq\IOsm$.
     \item For $I = (x_1,\dots,x_m)_{\mathcal{K}}\in \IOsm$, $I\in \IOm$ if and only if $x_m = 0$. In particular, $I = (x_1,\dots,x_m)_{\mathcal{K}}\in \IOsm\setminus \IOm$ if and only if $x_m = 1$. 
     \item For every $I\in \IOsm\setminus \IOm$, $I + \Hm \in \IOm$.
 \end{enumerate}
\end{proposition}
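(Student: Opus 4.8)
The plan is to understand the statement as a direct specialization of Proposition~\ref{prop:normalized-ideals-unitary-extension} to the ordinary case, since $H_{m+1}$ is obtained from $H_m$ by adding the single generator $m$. First I would observe that $H_{m+1} = H_m \setminus \{m\}$. Indeed, $H_m = \mathbb{N}\setminus\{1,\dots,m-1\}$ and $H_{m+1}=\mathbb{N}\setminus\{1,\dots,m\}$, so removing $m$ from $H_m$ gives exactly $H_{m+1}$. The element $a=m$ is a minimal generator of $H_m$ (every element of the interval $[m, 2m-1]$ is a minimal generator of an ordinary semigroup), but here the multiplicity is $m$ and the Frobenius number of $H_m$ is $m-1$, so $a=m$ does \emph{not} satisfy the hypothesis $a>\max\{m,\operatorname{F}(S)\}$ required by Proposition~\ref{prop:normalized-ideals-unitary-extension}. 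Hence the earlier proposition cannot be invoked as a black box, and I would prove the three items directly from the Kunz-coordinate description in Remark~\ref{rem: numero de ideales de sem ordinario} and equation~\eqref{eq:kunz-inequalities-general}.

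For item (2), I would use that $H_{m+1}$ has multiplicity $m+1$, so its ideals are described by Kunz coordinates $(x_1,\dots,x_m)$ with each $x_i\in\{0,1\}$ (by Remark~\ref{rem: numero de ideales de sem ordinario} applied to $H_{m+1}$). The key point is the meaning of the last coordinate: $x_m=0$ if and only if $m\in I$. Now $I\in\IOm$ precisely when $I$ is also an ideal of $H_m$, and since $H_m$ has multiplicity $m$, membership is governed by whether $I+H_m=I$. The crucial observation is that $I$ is an ideal of $H_m$ if and only if $m\in I$ forces the appropriate closure; more concretely, an ideal of $H_{m+1}$ is an ideal of $H_m$ exactly when adding $m$ keeps it inside itself, which happens iff $m\in I$, i.e.\ iff $x_m=0$. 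I would make this precise by checking the defining condition $I+H_m\subseteq I$: the only generator of $H_m$ not in $H_{m+1}$ is $m$, so $I+H_m\subseteq I$ reduces to $m+I\subseteq I$, and since $m=\min(H_m^*)$ and $I$ is already $H_{m+1}$-closed, this boils down to $m\in I$. This gives the biconditional, and then item (1) follows since $H_m$ itself (with $x_m$ necessarily $0$ as a coordinate) sits inside $\IOsm$, while the ideal with $x_m=1$ witnesses the strict inclusion.

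For item (3), given $I\in\IOsm\setminus\IOm$, so that $x_m=1$, I would compute the Kunz coordinates of $I+H_m$ and show the $m$th coordinate becomes $0$, which by item (2) places $I+H_m$ in $\IOm$. The cleanest route is to note that $H_m$, viewed as an ideal of $H_{m+1}$, is idempotent (it is an oversemigroup of $H_{m+1}$, hence an idempotent ideal by \cite[Proposition~5.14]{apery-icm}), so $I+H_m$ is genuinely an $H_m$-ideal, and the only thing left is to verify it lands in the normalized set $\IOm$ rather than merely being an $H_m$-ideal. Since $\min(I)=0$ and $0\in H_m$, we keep $\min(I+H_m)=0$, so normalization is automatic; the content is that $I+H_m$ absorbs $m$, turning the last coordinate from $1$ to $0$. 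Using the simplified sum formula, the $m$th coordinate of $I+H_m$ is a minimum over decompositions, and the decomposition $m = m + 0$ contributes the $m$th coordinate of $H_m$, which is $0$.

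The main obstacle I anticipate is item (3), specifically showing cleanly that the $m$th Kunz coordinate of $I+H_m$ collapses to $0$ without a messy case analysis over all index decompositions. Because the ambient multiplicity here is $m$ for $H_m$ but $m+1$ for $I$ (an ideal of $H_{m+1}$), I must be careful about \emph{which} multiplicity governs the Kunz coordinates of the sum; the sum $I+H_m$ should be regarded with respect to the multiplicity $m+1$ so that the coordinate vectors are comparable, and I would need to confirm that the simplified ordinary sum formula of Remark~\ref{rem: coordenadas de una suma-caso ordinario} (which assumes a single common multiplicity) applies after re-expressing $H_m$ by its Kunz coordinates relative to multiplicity $m+1$. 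Resolving this bookkeeping — pinning down the common multiplicity and the correct coordinate of $H_m$ in that system — is the delicate step; once it is settled, the computation that the last coordinate vanishes should be short.
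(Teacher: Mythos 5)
Your proposal is correct and follows essentially the same route as the paper's proof: the paper likewise argues the three items directly (it cannot invoke Proposition~\ref{prop:normalized-ideals-unitary-extension}, exactly because $a=m$ fails the hypothesis $a>\max\{m,\operatorname{F}(S)\}$, as you note), characterizes membership in $\IOm$ by $m\in I$, equivalently $x_m=0$, and computes $I+\Hm$ in Kunz coordinates relative to the common multiplicity $m+1$ with $\Hm=(1,\dots,1,0)_{\mathcal{K}}$, so that the last coordinate drops to $0$ via the decomposition $m=0+m$. The only step you leave implicit is the containment in item (1), which the paper settles with the one-liner $I\subseteq I+\Hsm\subseteq I+\Hm=I$ for $I\in\IOm$.
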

\begin{proof}
Let $I\in \IOm$. Then, $I\subseteq I+H_{m+1}\subseteq I+H_m=I$, which yields $I=I+H_{m+1}$ and consequently $I\in H_{m+1}$. Notice that $H_{m+1}\in \IOsm\setminus \IOm$. This proves (1).

Now, take $I=(x_1,\dots,x_m)_{\mathcal{K}}\in \IOsm$. We know by the paragraph after \eqref{eq:kunz-inequalities-general} that $m=0\times (m+1)+m\in I$ if and only if $x_m\le 0$, or equivalently, $x_m=0$. 

If $I\in \IOm$, then $m\in I$, which forces $x_m=0$. For the converse, suppose that $x_m=0$. Then, $m\in I$, and so $H_m\subseteq I$, and this yields $I+H_m=I+(H_{m+1}\cup\{m\})=I\cup (m+I)=I$. Hence, $I\in \IOsm$. This proves (2), since the second part of this assertion follows from the fact that $x_m\in \{0,1\}$.

Let $I\in \IOsm\setminus \IOm$ with $I = (x_1,\dots,x_{m-1},1)_{\mathcal{K}}$ (in virtue of (2)).% and set $I + \Hm = (z_1,\dots, z_m)_{\mathcal{K}}$. 
Clearly, $\Hm$ has Kunz coordinates $(1,\dots, 1,0)_{\mathcal{K}}$ as an element of $\IOsm$. In light of Remark~\ref{rem: coordenadas de una suma-caso ordinario}, $I+\Hm=(x_1,\dots,x_{m-1},1)_{\mathcal{K}}+ (1,\dots,1,0)_{\mathcal{K}}=(x_1,\dots,x_{m-1},0)_{\mathcal{K}}$.
Hence, $I + \Hm \in \IOm$ by (2).
\end{proof}

\begin{remark}
Let $m$ be a positive integer, and let  $I = (x_1,\dots, x_{m-1})_{\mathcal{K}}\in\Ni[\Hm]$, then using \eqref{eq:inclusion-kunz-coord} and Remark \ref{rem: numero de ideales de sem ordinario}, $I^{c} = (x'_{1},\dots,x'_{m-1})_{\mathcal{K}}$ is a cover of $I$ with respect to inclusion if and only if there exists a unique $i\in\{1,\dots,m-1\}$ such that $x_{i} = 1$ and $x'_{i} = 0$ and $x'_j = x_j$, for all $j\in\{1,\dots m-1\}\setminus\{i\}$. This can be seen as a rephrasing of \cite[Lemma~10]{isom-icm} for the particular case of ordinary semigroups with the use of \eqref{eq:genus-ideal-kunz}.  
\end{remark}

\section{When the poset of normalized ideals is a lattice}\label{sec:lattice}

In \cite[Theorem~20]{m-3-poni}, we proved that for $S$ a numerical semigroup with multiplicity three, the poset $(\Ni,\preceq)$ is a lattice, providing also an explicit description of its lattice operations. 

In this section, we characterize numerical semigroups $S$ fulfilling that $(\Ni,\preceq)$ is a lattice. We prove that $(\Ni,\preceq)$ is a lattice if and only if the multiplicity of $S$ does not exceed four.

From the content of \cite[Theorem 20]{m-3-poni}, one might reasonably think that $I\cup J$ or $I+J$ could act as $I\vee J$, but this is not the case in general. There are ideals $I$ and $J$ for which $I\vee J$ exists, however it is neither $I\cup J$ nor $I+J$, as shown by the following.

\begin{example}\label{ex:join-not-sum-nor-union}
  Let $S=\langle 4,9\rangle$. Then, $(\Ni,\preceq)$ is a join semilattice. We also have that 
  \begin{itemize}
    \item $(\{ 0, 1, 2 \}+S)\vee (\{ 0, 1, 6, 7 \}+S)= (\{ 0, 1, 2 \}+S)\cup (\{ 0, 1, 6, 7 \}+S)\neq (\{ 0, 1, 2 \}+S)+ (\{ 0, 1, 6, 7 \}+S)$,
    \item $(\{ 0, 1, 2 \}+S)\vee (\{ 0, 2, 5 \}+S)= (\{ 0, 1, 2 \}+S)+(\{ 0, 2, 5 \}+S)=\mathbb{N}\neq (\{ 0, 1, 2 \}+S)\cup (\{ 0, 2, 5 \}+S)$,
    \item $(\{ 0, 1, 2 \}+S)\vee (\{ 0, 1, 6\}+S)=\{0,1,2,7\}+S$, which is  not equal to $(\{ 0, 1, 2 \}+S)+ (\{ 0, 1, 6\}+S)$  nor to $(\{ 0, 1, 2 \}+S)\cup (\{ 0, 1, 6\}+S)$.
  \end{itemize}
\end{example}

The following example will be central to prove that for any numerical semigroup $S$ with multiplicity greater than four $(\Ni,\preceq)$ is not a lattice.

\begin{example}\label{ex:56789}
  The set of minimal upper bounds for $\{\{0,1\}+H_5,\{0,1,3\}+H_5\}$ is $\{\{ 0, 1, 2, 3 \}+H_5, \{ 0, 1, 3, 4 \}+H_5\}$, and so there is no supremum for the pair of elements $\{0,1\}+H_5$ and $\{0,1,3\}+H_5$ in $\Ni[H_5]$.
\end{example}

Let $S$ be a numerical semigroup, and let $T$ be an oversemigroup of $S$, that is, an idempotent element in $\Ni$ \cite[Proposition~5.14]{apery-icm}. Define
\[
  C_T=\{ I\in \Ni : I+T=I\} \text{  and  } \uparrow\! T = \{I \in \Ni : T\preceq I\}.
\]
Recall that $\uparrow\! T=C_T=\mathfrak{I}_0(T)$ \cite[Lemma 7]{m-3-poni}.

\begin{proposition}
  Let $S$ be a numerical semigroup with multiplicity greater than or equal to five. Then, $(\Ni,\preceq)$ is not a join semilattice nor a meet semilattice.
\end{proposition}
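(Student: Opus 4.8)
The plan is to reduce both assertions to the single ordinary semigroup $H_5$, realized inside $\Ni$ as an up-set. Since $S$ has multiplicity $m\ge 5$, every positive element of $S$ is at least $m\ge 5$, so $S\subseteq H_5$; thus $H_5$ is an oversemigroup of $S$ and hence an idempotent element of $\Ni$ by \cite[Proposition~5.14]{apery-icm}. By \cite[Lemma~7]{m-3-poni} we have $\uparrow\! H_5=C_{H_5}=\Ni[H_5]$, and this identification respects $\preceq$: for $I,J\in\uparrow\! H_5$, if $I+K=J$ with $K\in\Ni$, then $J=J+H_5=I+(K+H_5)$ with $K+H_5\in\Ni[H_5]$, so the order induced from $(\Ni,\preceq)$ agrees with the intrinsic order of $(\Ni[H_5],\preceq)$ (the reverse implication is immediate from $\Ni[H_5]\subseteq\Ni$). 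Consequently $\uparrow\! H_5$ is an order-isomorphic copy of $(\Ni[H_5],\preceq)$, and the four ideals $I_0=\{0,1\}+H_5$, $J_0=\{0,1,3\}+H_5$, $U_1=\{0,1,2,3\}+H_5$ and $U_2=\{0,1,3,4\}+H_5$ of Example~\ref{ex:56789} all lie in $\uparrow\! H_5\subseteq\Ni$.

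For the join part I would exploit that $\uparrow\! H_5$ is upward closed. If $K\in\Ni$ is any upper bound of $\{I_0,J_0\}$, then $H_5\preceq I_0\preceq K$, whence $K\in\uparrow\! H_5=\Ni[H_5]$; therefore the upper bounds of $\{I_0,J_0\}$ in $\Ni$ coincide with those in $\Ni[H_5]$. By Example~\ref{ex:56789} the latter set has the two incomparable minimal elements $U_1$ and $U_2$, so $\{I_0,J_0\}$ has no least upper bound in $\Ni[H_5]$, and hence none in $\Ni$. Thus $(\Ni,\preceq)$ is not a join semilattice.

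For the meet part I would use the same pair $U_1,U_2$ and first record, by a short finite computation in the Boolean-indexed poset $\Ni[H_5]$ dual to Example~\ref{ex:56789} (available in the repository cited in the introduction), that the maximal common lower bounds of $\{U_1,U_2\}$ in $\Ni[H_5]$ are exactly $I_0$ and $J_0$, which are incomparable. A one-sided version of the up-set trick then finishes: suppose $W=U_1\wedge U_2$ exists in $\Ni$. Since $I_0$ and $J_0$ are common lower bounds of $\{U_1,U_2\}$ in $\Ni$, we get $I_0\preceq W$ and $J_0\preceq W$; but $H_5\preceq I_0\preceq W$ forces $W\in\uparrow\! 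H_5=\Ni[H_5]$, so $W$ is a common lower bound of $\{U_1,U_2\}$ inside $\Ni[H_5]$ lying above both maximal common lower bounds $I_0,J_0$. Maximality yields $W=I_0=J_0$, contradicting $I_0\ne J_0$. Hence $\{U_1,U_2\}$ has no infimum in $\Ni$, and $(\Ni,\preceq)$ is not a meet semilattice.

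The essential ingredients are the poset identification $\uparrow\! H_5=\Ni[H_5]$ and the two small computations in $\Ni[H_5]$ (the minimal upper bounds of $\{I_0,J_0\}$, already supplied by Example~\ref{ex:56789}, and the maximal lower bounds of $\{U_1,U_2\}$). The main conceptual obstacle is the meet part: unlike the set of upper bounds, the set of lower bounds of a pair in $\uparrow\! H_5$ need not stay inside $\uparrow\! H_5$, so the computation cannot be transferred directly. The resolution is the observation that any candidate infimum must dominate the two lower bounds $I_0,J_0$ that already lie in $\uparrow\! H_5$, which pulls it back into $\uparrow\! H_5$ and returns the question to the finite poset.
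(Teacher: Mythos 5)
Your proof is correct, and the join half is essentially the paper's argument: embed $\Ni[H_5]$ into $\Ni$ as the up-set $\uparrow\!H_5=C_{H_5}$, note that the two incomparable minimal upper bounds of $\{0,1\}+H_5$ and $\{0,1,3\}+H_5$ from Example~\ref{ex:56789} rule out a supremum there, and use upward closure to conclude the same in $\Ni$. (Your explicit check that the restricted order on $\uparrow\!H_5$ coincides with the intrinsic order of $\Ni[H_5]$, via $J=I+(K+H_5)$, is a worthwhile detail that the paper leaves to \cite[Lemma~7]{m-3-poni}.) Where you diverge is the meet half. The paper dispatches it in one line: $(\Ni,\preceq)$ is finite and has a top element $\mathbb{N}$, so by \cite[Theorem 2.4]{Nation} a meet semilattice structure would force it to be a lattice, hence a join semilattice --- contradiction with the join half. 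You instead argue directly, exhibiting $U_1=\{0,1,2,3\}+H_5$ and $U_2=\{0,1,3,4\}+H_5$ as a pair with two incomparable maximal common lower bounds ($I_0$ and $J_0$) in $\Ni[H_5]$, and then pulling any would-be infimum in $\Ni$ back into $\uparrow\!H_5$ because it must dominate $I_0$. This is a genuinely different route: it costs you an extra finite computation that the paper never needs (I verified it: the common lower bounds of $\{U_1,U_2\}$ in $\Ni[H_5]$ are exactly $H_5$, $I_0$, $J_0$, and $I_0\not\preceq J_0$ even though $I_0\subseteq J_0$), and you should either carry out that computation explicitly or at least not defer it to a repository that does not contain it. What it buys is independence from Nation's theorem and a concrete witness pair for the failure of meets; what the paper's route buys is brevity, at the price of relying on the abstract fact about finite semilattices with a greatest element.
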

\begin{proof}
  Since $(\Ni,\preceq)$ is a finite poset, by \cite[Theorem 2.4]{Nation}, it suffices to show that $(\Ni,\preceq)$ is not a meet semilattice. 
  
  As $\operatorname{m}(S)\ge 5$, we have that $H_5$ is an oversemigroup of $S$. The poset $(\mathfrak{I}_0(H_5),\preceq)$ is not a join semilattice (see Example~\ref{ex:56789}) and, in particular, there is no supremum for the elements $\{0,1\}+H_5$ and $\{0,1,3\}+H_5$; since these ideals are in $\uparrow\! H_5=\Ni[H_5]$, its supremum in $\Ni$ should also be in $\uparrow\! H_5$. 
  %The fact that it is not a meet semilattice follows from the previous claim. Indeed, if $(\Ni,\preceq)$ were a meet semilattice, it would also be a lattice, since it is finite (see for instance \cite[Theorem 2.4]{Nation}). 
\end{proof}

A sort of converse to Remark \ref{rem: sobre ideales normalizados} is covered in the following technical lemma.

\begin{lemma}\label{lem:preceq-downwards}
    Let $S$ be a numerical semigroup with multiplicity $m$ and let $a$ be a minimal generator of $S$ with $m\neq a > \operatorname{F}(S) $. %different from the multiplicity of $S$ and greater than the Frobenius number of $S$.
    Let $I$ and $J$ be two ideals of $S$ such that $a$ is a minimal generator of $I$ and of $J$ as ideals of $S\setminus\{a\}$. If $I+K=J$ for some ideal $K$ of $S$, then $a$ is a minimal generator of $K$ (as an ideal of $S\setminus\{a\}$) and $(I\setminus\{a\})+(K\setminus\{a\})=J\setminus\{a\}$.
\end{lemma}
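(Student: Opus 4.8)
The plan is to reduce the statement to a single membership question inside $S'=S\setminus\{a\}$ and then to isolate the one delicate point. Write $I'=I\setminus\{a\}$, $J'=J\setminus\{a\}$ and $K'=K\setminus\{a\}$, and work with the normalized representatives, so that $0$ lies in each of the three ideals; from $\min J=\min I+\min K$ this forces $0\in K$ as well. Because $a$ is a minimal generator of $I$ and of $J$ as ideals of $S'$, the sets $I'$ and $J'$ are ideals of $S'$ not containing $a$, and Proposition~\ref{prop:normalized-ideals-unitary-extension} gives $I'+S=I$ and $J'+S=J$. I also record the elementary containments $I=0+I\subseteq I+K=J$ and $K=0+K\subseteq J$, together with $a=0+a\in K+S=K$ (here $a\in S$ and $K$ is an ideal of $S$).

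The assertion that $a$ is a minimal generator of $K$ as an ideal of $S'$ is the easy half. We already have $a\in K$, so it suffices to check that $a$ is $\le_{S'}$-minimal in $K$; but $K\subseteq J$ and, by hypothesis, $a$ is $\le_{S'}$-minimal in $J$, so it is a fortiori $\le_{S'}$-minimal in the smaller set $K$. Equivalently, $K'=K\setminus\{a\}$ is an ideal of $S'$, and then $K'+S=K$ by Proposition~\ref{prop:normalized-ideals-unitary-extension}.

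For the displayed identity I would first peel off the role of $a$ in the sum. Since $I=I'+S$ and $S+K=K$, one obtains $J=I+K=(I'+S)+K=I'+K$; writing $K=K'\cup\{a\}$ then gives $J=(I'+K')\cup(a+I')$. Next, $I'+K'$ is a normalized ideal of $S'$, hence contains the conductor tail $[\operatorname{C}(S'),\infty)=[a+1,\infty)$ (note $\operatorname{F}(S')=a$, because $a>\operatorname{F}(S)$), so every element of $(a+I')\setminus\{a\}$, being at least $a+1$, already lies in $I'+K'$. Consequently
\[
  J\setminus\{a\}=(I'+K')\setminus\{a\},
\]
and therefore the desired identity $I'+K'=J'$ holds precisely when $a\notin I'+K'$.

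Thus the whole statement reduces to showing $a\notin I'+K'$, and this is the step I expect to be the main obstacle. Any decomposition $a=u+v$ with $u\in I'$ and $v\in K'$ must have $u,v>0$ (otherwise $a\in I'$ or $a\in K'$, which is impossible), and then $u,v\in J$ combined with the $\le_{S'}$-minimality of $a$ in $J$ force both $u$ and $v$ to be gaps of $S$. The difficulty is therefore to preclude that the minimal generator $a$ of $S$ is realized as a sum of two positive gaps lying respectively in $I'$ and $K'$; this does not follow from the indecomposability of $a$ in $S$ alone, so the argument has to exploit the full strength of the hypothesis that $a$ is \emph{simultaneously} a minimal generator of $I$ and of $J$ over $S'$ (together with the identity $I'+K=J$ established above). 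Pinning down exactly which gap configurations remain compatible with these constraints is where the real work of the proof lies.
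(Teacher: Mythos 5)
Your reduction is sound and, in substance, parallels the paper's own argument (which is phrased in Kunz coordinates): the first half of the lemma is handled exactly as you handle it, and the identity $(I\setminus\{a\})+(K\setminus\{a\})=J\setminus\{a\}$ does come down to the single claim $a\notin (I\setminus\{a\})+(K\setminus\{a\})$ --- in the paper this is the assertion that $a\notin \Ap(I\setminus\{a\})+\Ap(K\setminus\{a\})$, which is what drives the computation of the $i$th Kunz coordinate of the sum. But you stop precisely at that claim, so as it stands the proposal is not a proof: the key step is missing, and that is a genuine gap.

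Your suspicion about that step is, however, well founded. The paper disposes of it in one line (``since $a\notin J\setminus\{a\}$, $a\neq u+v$ for any $u\in I\setminus\{a\}$ and $v\in K\setminus\{a\}$''), but this inference already presupposes $(I\setminus\{a\})+(K\setminus\{a\})\subseteq J\setminus\{a\}$, which is what is being proved; a priori one only knows $u+v\in I+K=J$, and $a$ does lie in $J$. In fact the configuration you could not exclude does occur. Take $S=\langle 4,6,9,11\rangle$, so $\operatorname{F}(S)=7$, and $a=9$; let $I=\{0,2\}+S$, $K=\{0,7\}+S$ and $J=I+K=\{0,2,7\}+S=\mathbb{N}\setminus\{1,3,5\}$. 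Since $9-y\in S\setminus\{9\}$ with $y\ge 0$, $y\ne 9$ forces $y\in\{1,3,5\}$, and each of $I$, $J$, $K$ contains $9$ but misses $1$, $3$ and $5$, the element $9$ is a minimal generator of all three as ideals of $S\setminus\{9\}$. Yet $2\in I\setminus\{9\}$, $7\in K\setminus\{9\}$ and $2+7=9$, so $9\in(I\setminus\{9\})+(K\setminus\{9\})$ while $9\notin J\setminus\{9\}$; indeed $(I\setminus\{9\})+(K\setminus\{9\})=J\neq J\setminus\{9\}$. So the step you left open is not merely hard: without further hypotheses it is false, the lemma as stated fails, and any repair (of your argument or of the paper's) must add a condition excluding decompositions $a=u+v$ with $u$ a positive gap of $S$ lying in $I$ and $v$ one lying in $K$.
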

\begin{proof}
%Let $m$ be the multiplicity of $S$, and 
Let $i=a\bmod m$. We can write $a=k_i m +i$, if $(k_1,\dots,k_{m-1})$ are the Kunz coordinates of $S$. 
    
If $a$ is not a minimal generator of $K$ as an ideal of $S\setminus\{a\}$, then $a=x+s$ for some $x\in K$ and $s\in S\setminus\{a\}$. But then $x\in J$ (as $K\preceq J$, we have $K\subseteq J$), which contradicts the fact that $a$ is a minimal generator of $J$.

As $a$ is a minimal generator of $I$, $J$ and $K$, we deduce that $I\setminus\{a\}, J\setminus\{a\},K\setminus\{a\}\in \Ni[S\setminus \{a\}]$.
Notice that $I\setminus\{a\}, J\setminus\{a\},K\setminus\{a\}\not\in \Ni$. For instance, $(I\setminus\{a\})+S\neq I\setminus\{a\}$ because $a=0 + a\in (I\setminus\{a\})+S$ and $a\not\in I\setminus\{a\}$.  

By Proposition~\ref{prop:normalized-ideals-unitary-extension}, if $I\setminus\{a\}=(x_1,\dots,x_{m-1})_{\mathcal{K}}$, $K\setminus\{a\}=(y_1,\dots,y_{m-1})_{\mathcal{K}}$, and $J\setminus\{a\}=(z_1,\dots,z_{m-1})_{\mathcal{K}}$, then $x_i=y_i=z_i=k_i+1$. If $I=(x_1',\dots,x_{m-1}')_{\mathcal{K}}$, $K=(y_1',\dots,y_{m-1}')_{\mathcal{K}}$ and $J=(z_1',\dots,z_{m-1}')_{\mathcal{K}}$ (seen as elements of $\Ni$), then $(x_1',\dots,x_{m-1}')=(x_1,\dots,x_{m-1})-\mathbf{e}_i$, $(y_1',\dots,y_{m-1}')=(y_1,\dots,y_{m-1})-\mathbf{e}_i$, and $(z_1',\dots,z_{m-1}')=(z_1,\dots,z_{m-1})-\mathbf{e}_i$, where $\mathbf{e}_i$ is the $i$th row of the $(m-1)\times(m-1)$ identity matrix. Also, $(I\setminus\{a\})+S=I$, $(J\setminus\{a\})+S=J$, and $(K\setminus\{a\})+S=K$. 

By hypothesis, 
\[
I+K=(x_1',\dots,x_{m-1}')_{\mathcal{K}}+(y_1',\dots,y_{m-1}')_{\mathcal{K}}=(z_1',\dots,z_{m-1}')_{\mathcal{K}}=J.
\]
By \eqref{eq:kunz-coordinates-sum-general},  $z_l'=\min(\{ x_{l_1}'+y_{l_2}'+\lfloor \frac{l_1+l_2}{m}\rfloor : l_1,l_2\in \{0,\dots,m-1\}, l_1+l_2\equiv l \pmod{m} \})$. If we want to prove that 
\[
(I\setminus\{a\})+(K\setminus\{a\})=(x_1,\dots,x_{m-1})_{\mathcal{K}}+(y_1,\dots,y_{m-1})_{\mathcal{K}}=(z_1,\dots,z_{m-1})_{\mathcal{K}}=J\setminus\{a\},
\]
then we have to show that for every $l\in\{1,\dots,m-1\}$,
\[z_l=\min\left(\left\{ x_{l_1}+y_{l_2}+\left\lfloor \frac{l_1+l_2}{m}\right\rfloor : l_1,l_2\in \{0,\dots,m-1\}, l_1+l_2\equiv l \tpmod{m} \right\}\right).\]
First, observe that since $a\not\in J\setminus\{a\}$, $a\neq u+v$ for any $u\in I\setminus\{a\}$ and $v\in K\setminus\{a\}$, and consequently $a\not\in \Ap(I\setminus\{a\})+\Ap(K\setminus\{a\})$. Hence, $x_{i_1}+y_{i_2}+\lfloor \frac{i_1+i_2}{m}\rfloor > k_i$ for all $i_1,i_2\in \{1,\dots,m-1\}$ such that $i_1+i_2\equiv i \tpmod{m}$. This, in particular, implies that $x_{i_1}+y_{i_2}+\lfloor \frac{i_1+i_2}{m}\rfloor \ge k_i+1$ for all $i_1,i_2\in \{1,\dots,m-1\}$ such that $i_1+i_2\equiv i \pmod{m}$. Therefore, $z_i=k_i+1=x_i+y_0=x_0+y_i=x_i=y_i= \min(\{ x_{i_1}+y_{i_2}+\lfloor \frac{i_1+i_2}{m}\rfloor : i_1,i_2\in \{0,\dots,m-1\}, i_1+i_2\equiv i \tpmod{m} \}) $.

Now, take $l\in\{1,\dots,m-1\}\setminus\{i\}$. Then, 
\[z_l'= \min(\{ x_{l_1}'+y_{l_2}'+\lfloor \frac{l_1+l_2}{m}\rfloor : l_1,l_2\in \{0,\dots,m-1\}, l_1+l_2\equiv l \pmod{m} \}).\] 
It follows that 
\begin{align}
    z_l'& = \min\left(\left\{ x_{l_1}'+y_{l_2}'+\left\lfloor \frac{l_1+l_2}{m}\right\rfloor : l_1,l_2\in \{0,\dots,m-1\}, l_1+l_2\equiv l \pmod{m} \right\}\right)\notag \\ 
    & =\min\left(\left\{ x_{l_1}'+y_{l_2}'+\left\lfloor \frac{l_1+l_2}{m}\right\rfloor : l_1,l_2\in \{0,\dots,m-1\}\setminus\{i\}, l_1+l_2\equiv l \tpmod{m} \right\}\right) \label{eq:zzp-1}\\ 
    & =\min\left(\left\{ x_{l_1}+y_{l_2}+\left\lfloor \frac{l_1+l_2}{m}\right\rfloor : l_1,l_2\in \{0,\dots,m-1\}\setminus\{i\}, l_1+l_2\equiv l \tpmod{m} \right\}\right) \notag \\ 
    & =\min\left(\left\{ x_{l_1}+y_{l_2}+\left\lfloor \frac{l_1+l_2}{m}\right\rfloor : l_1,l_2\in \{0,\dots,m-1\}, l_1+l_2\equiv l \tpmod{m} \right\}\right)=z_l. \label{eq:zzp-2}
\end{align} 
Equality \eqref{eq:zzp-1} follows by applying  \eqref{eq:kunz-inequalities-general} to $I$ and $K$:
\begin{itemize}
    \item if $l_1=i$, then $x_{l_1}'+y_{l_2}'=k_i+y_{l_2}'+ \lfloor \frac{l_1+l_2}{m}\rfloor\ge y_{(i+l_2)\bmod m}' = x_0'+y_l'$;
    \item if $l_2=i$, then $x_{l_1}'+y_{l_2}'=x_{l_1}'+k_i+ \lfloor \frac{l_1+l_2}{m}\rfloor \ge x_{(i+l_2)\bmod m}' = x_l'+y_0'$.
\end{itemize}
While \eqref{eq:zzp-2} holds by applying \eqref{eq:kunz-inequalities-general}  to $I\setminus\{a\}$ and $K\setminus\{a\}$:
\begin{itemize}
    \item if $l_1=i$, then $x_{l_1}+y_{l_2}=(k_i+1)+y_{l_2}+ \lfloor \frac{l_1+l_2}{m}\rfloor\ge y_{(i+l_2)\bmod m} = x_0+y_l$;
    \item if $l_2=i$, then $x_{l_1}+y_{l_2}=x_{l_1}+(k_i+1)+ \lfloor \frac{l_1+l_2}{m}\rfloor \ge x_{(i+l_2)\bmod m} = x_l+y_0$.
\end{itemize}
This proves that $I\setminus\{a\}+K\setminus\{a\}=J\setminus\{a\}$.
\end{proof}

Notice that the previous result is false if $a$ is chosen to be the multiplicity of $S$, as shown by the following example, inspired by the Hasse diagram of $H_4$
%$\langle 4,5,6,7\rangle$ 
drawn in Figure~\ref{fig:evolution}. %(\textcolor{red}{EX. Ordinario con m= 3 y m=4})

\begin{example}
    Let $S=\langle 3,4,5\rangle$ and let $a=3=\operatorname{m}(S)$. Set $T=S\setminus\{3\}=\langle 4,5,6,7\rangle$. 
    
    Let $I=\{0,2\}+S=\{0,2,3\}+T$ and $J=\{0,1,2\}+S=\{0,1,2,3\}+T=\mathbb{N}$. Then, $3$ is a minimal generator of $I$ and $J$ as ideals of $T$.  Take $K=\{0,1\}+S=\{0,1,3\}+T$. Then,  $I+K=J$, and so $I\preceq J$. However, $I\setminus\{3\}=\{0,2\}+T$, $J\setminus\{3\}=\{0,1,2\}+T$, $K\setminus\{3\}=\{0,1\}+T$, and $(I\setminus\{3\})+(K\setminus\{3\})\neq (J\setminus\{3\})$. As a matter of fact, $I\setminus\{3\} \not\preceq J\setminus\{3\}$.

    Notice that $I=(1,0,0)_{\mathcal{K}}=(1,0)_{\mathcal{K}}$, $I\setminus\{3\}=(1,0,1)_{\mathcal{K}}$,  $J=(0,0,0)_{\mathcal{K}}=(0,0)_{\mathcal{K}}$, and $J\setminus\{3\}=(0,0,1)_{\mathcal{K}}$.
\end{example}

We now have all the ingredients to prove that the lattice property on $\Ni$ is preserved once we remove a minimal generator of $S$ different from its multiplicity.

\begin{theorem}\label{thm:lattice-removing-minimal-generator}
Let $S$ be a numerical semigroup such that $(\Ni,\preceq)$ is a lattice, and let $a$ be a minimal generator of $S$, with $m\neq a > \operatorname{F}(S) $. Then, $(\Ni[S\setminus \{a\}],\preceq)$ is a lattice as well.
\end{theorem}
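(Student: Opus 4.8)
The plan is to show that $(\Ni[S\setminus\{a\}],\preceq)$ is a join semilattice; since it has a minimum element, namely $S\setminus\{a\}$ itself (for which $S\setminus\{a\}\preceq I$ for every $I$, as $S\setminus\{a\}$ is the identity of the monoid), it will then be a lattice by \cite[Theorem~2.4]{Nation}. Write $S'=S\setminus\{a\}$ and $i=a\bmod m$, and let $\phi\colon\Ni[S']\to\Ni$ be the map $\phi(I)=I+S$, well defined by Remark~\ref{rem:I+S-ideal-S}. By Proposition~\ref{prop:normalized-ideals-unitary-extension}, $\Ni[S']$ is the disjoint union of the ``old'' ideals $\Ni$ and the ``new'' ideals $\Ni[S']\setminus\Ni$ (those not containing $a$, equivalently those with $i$-th Kunz coordinate $k_i+1$); the map $\phi$ fixes old ideals and sends a new ideal $I$ to $\phi(I)=I\cup\{a\}$, which covers $I$.

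First I would record two order-theoretic facts. (i) A new ideal can never lie above an old one: if $J\in\Ni$ and $J\preceq U$, then $a\in J\subseteq U$ by Remark~\ref{rem:subset-contained-prec}, so $U$ contains $a$ and is old; hence every upper bound of an old ideal is again old. (ii) For a new ideal $I$ one has $I\preceq\phi(I)$, and for new ideals $I,J$ the equivalence $I\preceq J\iff\phi(I)\preceq\phi(J)$ holds: the forward implication is Remark~\ref{rem: sobre ideales normalizados}, while the converse is exactly Lemma~\ref{lem:preceq-downwards} applied to $\phi(I),\phi(J)$, which have $a$ as a minimal generator. A consequence of (i) and (ii) is that, for an old ideal $U$ and a new ideal $I$, $I\preceq U\iff\phi(I)\preceq U$.

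With these in hand I would compute joins by cases. If $I,J$ are both old, fact (i) shows all their upper bounds are old, so $I\vee J$ is their join in the lattice $\Ni$. If $I$ is new and $J$ old, all upper bounds are again old, and an old $U$ dominates $I$ iff it dominates $\phi(I)$; hence $I\vee J=\phi(I)\vee J$ computed in $\Ni$. The delicate case is $I,J$ both new: set $M=\phi(I)\vee\phi(J)$ in $\Ni$. The old upper bounds of $\{I,J\}$ are precisely the old ideals $\succeq M$, while the new upper bounds correspond under $\phi$ to old ideals $V\succeq M$ for which $a$ is a minimal generator of $V$. I would split on whether $a$ is a minimal generator of $M$. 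If it is, then $M\setminus\{a\}$ is a new ideal lying below $M$, below every old upper bound through $M$, and below every $V\setminus\{a\}$ (by (ii), since $M\preceq V$); hence $M\setminus\{a\}$ is the join. If $a$ is not a minimal generator of $M$, I claim there are no new upper bounds at all: writing $a=u+s$ with $u\in M$ and $s\in S'\setminus\{0\}$, every $V\succeq M$ satisfies $M\subseteq V$, so $u\in V$ and $a=u+s$ witnesses that $a$ is not a minimal generator of $V$ either; thus $M$ is the join.

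The main obstacle is precisely this last case: ruling out ``new'' upper bounds when $a$ fails to be a minimal generator of $M$, so that the old join $M$ survives as the genuine supremum in $\Ni[S']$. The decisive point is that the failure of $a$ to be a minimal generator propagates upward to every ideal containing $M$, which—together with Remark~\ref{rem:subset-contained-prec} and the rigidity of the correspondence (ii)—prevents a strictly smaller new upper bound from existing.
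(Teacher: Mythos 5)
Your proposal is correct and follows essentially the same route as the paper's proof: the same case analysis on whether $a$ belongs to $I$ and $J$, the same reduction to $M=(I+S)\vee(J+S)$ computed in $\Ni$, and the same use of Lemma~\ref{lem:preceq-downwards} to descend to $M\setminus\{a\}$ when $a$ is a minimal generator of $M$. The only cosmetic difference is that you split on whether $a$ is a minimal generator of $M$ (showing no ``new'' upper bounds exist otherwise), whereas the paper splits on whether a new upper bound exists and derives the minimal-generator property from that; these are contrapositives of the same observation.
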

\begin{proof}
Let $I,J\in \Ni[S\setminus\{a\}]$. We distinguish several cases, depending on whether $a$ belongs or not to $I$ and $J$.

If $a\in I\cap J$, then by Proposition~\ref{prop:normalized-ideals-unitary-extension}, $I$ and $J$ are in $\Ni$, and so $I\vee J$ exists, because all upper bounds of $I$ and $J$ are in $\Ni$, and $(\Ni,\preceq)$ is a lattice by hypothesis.

Suppose that $a\in I$ and $a\not\in J$. Let $K\in \Ni[S\setminus\{a\}]$ be such that $I\preceq K$ and $J\preceq K$. Hence, $I\subseteq K$ and consequently $a\in K$. By Proposition~\ref{prop:normalized-ideals-unitary-extension}, $I,K\in \Ni$ and $J+S\preceq K+S=K$. As $J+S\in \Ni$ (by Proposition~\ref{prop:normalized-ideals-unitary-extension}), $I\vee (J+S)\preceq K$. This proves that $I\vee J=I\vee (J+S)$.

The case $a\not\in I$ and $a\in J$ is similar: $I\vee J=(I+S)\vee J$.

Finally, suppose that $a\not\in I\cup J$. If every $K\in \Ni[S\setminus\{a\}]$ with $I\preceq K$ and $J\preceq K$ is in $\Ni$, then $I+S\preceq K+S=K$ and $J+S\preceq K+S=K$, and so $I\vee J=(I+S)\vee (J+S)$.

Thus, the remaining case is when there is $K\in\Ni[S\setminus\{a\}]\setminus \Ni$ ($a\not \in K$ in virtue of Proposition~\ref{prop:normalized-ideals-unitary-extension}), such that $I\preceq K$ and $J\preceq K$. Notice that in this setting $a$ is a minimal generator of $K+S=K\cup\{a\}$ as an ideal of $S\setminus\{a\}$. % ($K=(K\cup \{a\})\setminus \{a\}$ is an ideal of $S\setminus \{a\}$).

As $(\Ni\preceq)$ is a lattice and both $I+S$ and $J+S$ are ideals of $S$, there exists $M=(I+S)\vee (J+S)$, which is an ideal of $S$. Since $I\preceq K$ and $J\preceq K$, we deduce that $I+S\preceq K+S$ and $J+S\preceq K+S$; therefore $M\preceq K+S=K\cup \{a\}$. % (Proposition~\ref{prop:normalized-ideals-unitary-extension}). 

Notice that $a\in M$ ($M$ contains $I+S=I\cup\{a\}$). Suppose that $a$ is not a minimal generator of $M$ (as an ideal of $S\setminus\{a\})$. Then, there exists $g\in M$ and $s\in S\setminus\{a\}$ with $s\neq 0$ and $a=g+s$. But then $g\in K$, and so $a=g+s\in K$, a contradiction.  Therefore, $a$ is a minimal generator of $M$, and consequently the set  $M\setminus\{a\}$ is an ideal of $S\setminus\{a\}$. Now, we can apply Lemma~\ref{lem:preceq-downwards} to $I+S\preceq M$ and $J+S\preceq M$, to derive that $I\preceq M\setminus\{a\}$ and $J\preceq M\setminus\{a\}$; also, from $M\preceq K+S$ to deduce that $M\setminus\{a\}\preceq K$. This proves that $M\setminus\{a\}\preceq K$, for every upper bound of $I$ and $J$ (with respect to $\preceq$) not containing $a$. If $K$ is an upper bound of $I$ and $J$ containing $a$, then $I+S\preceq K+S=K$ and $J+S\preceq K+S=K$, and so $M\preceq K$. Since $(M\setminus\{a\})+S=M$, we deduce that $M\setminus\{a\}\preceq M\preceq K$. This proves that $M\setminus\{a\}=I\vee J$.
\end{proof}

\begin{corollary}\label{cor:lattice-mult-four}
    Let $S$ be a numerical semigroup with multiplicity four. Then, $(\Ni,\preceq)$ is a lattice.
\end{corollary}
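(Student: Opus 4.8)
The plan is to reduce an arbitrary numerical semigroup $S$ of multiplicity four to the ordinary semigroup $H_4$ by successively removing suitable minimal generators, and then to run Theorem~\ref{thm:lattice-removing-minimal-generator} backwards along this reduction. Concretely, since $1,2,3\notin S$, we have $S\subseteq H_4=\mathbb{N}\setminus\{1,2,3\}$, and I would build a finite chain
\[
S=T_0\subsetneq T_1\subsetneq\cdots\subsetneq T_n=H_4,
\]
where each $T_{j+1}=T_j\cup\{\operatorname{F}(T_j)\}$ is the parent of $T_j$ in the tree of numerical semigroups. Reading the chain downwards, $T_j=T_{j+1}\setminus\{a_j\}$ with $a_j=\operatorname{F}(T_j)$, and the task is to verify that each $a_j$ satisfies the hypotheses of Theorem~\ref{thm:lattice-removing-minimal-generator} relative to $T_{j+1}$.

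For this I would invoke the standard facts that $T_{j+1}=T_j\cup\{\operatorname{F}(T_j)\}$ is again a numerical semigroup, that $a_j=\operatorname{F}(T_j)$ is a minimal generator of $T_{j+1}$, and that $\operatorname{F}(T_{j+1})<\operatorname{F}(T_j)=a_j$, so $a_j>\operatorname{F}(T_{j+1})$. Every $T_j$ has multiplicity four, since we only ever adjoin elements $\ge 5$ and never touch $1,2,3$ or remove $4$; indeed, for $T_j\neq H_4$ one has $\operatorname{F}(T_j)\ge 5$, because $4\in T_j$ forces $\operatorname{F}(T_j)\neq 4$, while $\operatorname{F}(T_j)=3$ only for $H_4$. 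In particular $a_j\neq m=4$, so the condition $m\neq a_j>\operatorname{F}(T_{j+1})$ holds at every step. The chain is finite because the genus strictly decreases at each step and equals $3$ exactly when $H_4$ is reached.

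It then remains to settle the base case, namely that $(\Ni[H_4],\preceq)$ is a lattice; this is the crux and the step I expect to be the real obstacle, because the analogous statement fails already for $H_5$ (Example~\ref{ex:56789}), so nothing general can be invoked. Fortunately $\Ni[H_4]$ has only $2^{3}=8$ elements, whose Kunz coordinates lie in $\{0,1\}^{3}$, so both the sums and the order $\preceq$ can be written out explicitly via \eqref{eq: coordinadas de Kunz-suma de ideales-caso ordinario}; a direct inspection of the resulting eight-element poset (with minimum $H_4$ and maximum $\mathbb{N}$) shows that every pair of elements has a supremum, so it is a join semilattice with zero and hence, being finite, a lattice by \cite[Theorem 2.4]{Nation}. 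Starting from this base case and applying Theorem~\ref{thm:lattice-removing-minimal-generator} to the removals $T_{j+1}\rightsquigarrow T_j$ for $j=n-1,\dots,0$, I would conclude by downward induction that $(\Ni[T_j],\preceq)$ is a lattice for every $j$, and in particular that $(\Ni,\preceq)=(\Ni[T_0],\preceq)$ is a lattice.
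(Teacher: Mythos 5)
Your proposal is correct and follows essentially the same route as the paper: the paper also verifies the base case $(\Ni[H_4],\preceq)$ directly (via its Hasse diagram in Figure~\ref{fig:evolution}), builds the same chain $S=S_1\subset\cdots\subset S_k=H_4$ by successively adjoining the Frobenius number, checks that each removed element is a minimal generator exceeding both $4$ and the Frobenius number of the larger semigroup, and then applies Theorem~\ref{thm:lattice-removing-minimal-generator} inductively. Your verification of the hypotheses at each step is, if anything, slightly more detailed than the paper's.
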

\begin{proof}
    Let $H_4$ be the ordinary numerical semigroup of multiplicity 4 (we are using the same notation introduced in Section \ref{sec: normalized ideals}). %$O=\langle 4,5,6,7\rangle=\{0\}\cup(4+\mathbb{N})$. 
The Hasse diagram of $(\Ni[H_4],\preceq)$ is shown in Figure~\ref{fig:evolution} (first diagram on the left; Kunz coordinates are used to display the vertices of the diagram). Clearly, $(\Ni[H_4],\preceq)$ is a lattice. If $S=H_4$, then we are done. Otherwise, $S$ is strictly contained in $H_4$. Let $S=S_1\subset \dots \subset S_k=H_4$ be the sequence of numerical semigroups obtained by applying \cite[Lemma~6]{ns-app}. Then, for every $i\in \{1,\dots,k-1\}$, $S_{i+1}=S_i\cup\{g_i\}$, with $g_i=\max(H_4\setminus S_i)=\operatorname{F}(S_i)$, and $S_i=S_{i+1}\setminus\{g_i\}$. Observe that $g_i$ is a minimal generator of $S_{i+1}$ greater than four; also $g_i=\max(H_4\setminus S_i)> \max(H_4\setminus S_{i+1})=\operatorname{F}(S_{i+1})=g_{i+1}$. Thus, we can apply Theorem~\ref{thm:lattice-removing-minimal-generator} to obtain that $(\Ni[S_i],\preceq)$ is a lattice for all $i$.
\end{proof}

If $S$ has multiplicity two, then \cite[Example~5.9]{apery-icm} ensures that $(\Ni,\preceq)$ is a totally ordered set, and thus a lattice. Also, recall that \cite[Theorem~20]{m-3-poni} states that if $S$ is a numerical semigroup with multiplicity three, then $(\Ni,\preceq)$ is a lattice.  Putting all this together, we obtain the following consequence.

\begin{corollary}\label{cor: reticulo iff m menorigual 4}
    Let $S$ be a numerical semigroup. Then, $(\Ni,\preceq)$ is a lattice if and only if $\operatorname{m}(S)\le 4$.
\end{corollary}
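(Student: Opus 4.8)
The plan is to prove the two implications separately, splitting on the value of the multiplicity $m=\operatorname{m}(S)$. Every case reduces to a result already established in the excerpt or in the cited literature, so the argument is really an assembly: I do not expect any genuinely new computation, and the only point requiring care is to make sure each admissible multiplicity is accounted for.

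For the implication ``$(\Ni,\preceq)$ a lattice $\Rightarrow m\le 4$'', I would argue by contraposition. If $m\ge 5$, then by the proposition stated just before Lemma~\ref{lem:preceq-downwards} (which asserts that for multiplicity at least five, $(\Ni,\preceq)$ is neither a join nor a meet semilattice), $(\Ni,\preceq)$ fails in particular to be a join semilattice. Since a lattice is in particular a join semilattice, $(\Ni,\preceq)$ cannot be a lattice, and therefore $m\le 4$.

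For the converse ``$m\le 4\Rightarrow (\Ni,\preceq)$ a lattice'', I would examine the four admissible multiplicities in turn. When $m=1$ we have $S=\mathbb{N}$, so $\Ni=\{\mathbb{N}\}$ is a one-element poset and hence trivially a lattice. When $m=2$, \cite[Example~5.9]{apery-icm} shows that $(\Ni,\preceq)$ is totally ordered, and any chain is a lattice. When $m=3$, the conclusion is precisely \cite[Theorem~20]{m-3-poni}. Finally, when $m=4$, it is Corollary~\ref{cor:lattice-mult-four}. Putting these four cases together completes the converse, and with it the characterization. As indicated, no step presents a real obstacle; the only thing to watch is that Corollary~\ref{cor:lattice-mult-four} covers only $m=4$, so the remaining cases $m\in\{1,2,3\}$ must be disposed of separately via the references above.
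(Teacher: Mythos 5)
Your proposal is correct and follows essentially the same route as the paper, which also assembles the corollary from the proposition on multiplicity at least five (for the forward direction) together with \cite[Example~5.9]{apery-icm}, \cite[Theorem~20]{m-3-poni} and Corollary~\ref{cor:lattice-mult-four} for multiplicities $2$, $3$ and $4$ respectively. Your explicit treatment of the trivial case $m=1$ is a small addition the paper leaves implicit, but it changes nothing of substance.
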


The following shows that, in general, the lattice $(\Ni, \preceq)$ is not distributive (for both multiplicity three and four).

\begin{remark}
%Let $S = H_4$. %$S=\langle 4,5,6,7\rangle$. 
It can be shown that the set $\{ H_4, \{0,3\}+H_4,\{0,2,3\}+H_4,\{0,1,2\}+H_4,\mathbb{N}\}$ is a sublattice of $\Ni[H_4]$ isomorphic to a a pentagon, and thus $\Ni[H_4]$ cannot be a distributive lattice. Every numerical semigroup $T$ with multiplicity larger than three will have $H_4$ as an oversemigroup. Thus, if $\Ni[T]$ is a lattice, it will contain the sublattice $\Ni[T]$, and thus $\Ni[T]$ will not be a distributive lattice. 
\end{remark}

\begin{figure}
    \centering
    \includegraphics[scale=0.5]{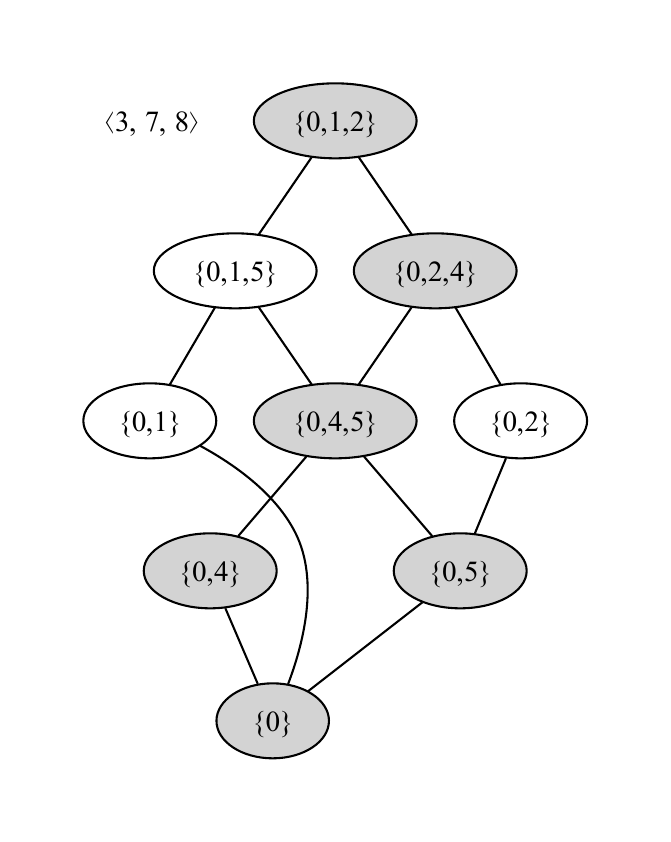}
    \caption{The non-distributive lattice $(\mathcal{I}_0(T),\preceq)$, for $T =\langle 3,7,8\rangle$.}
    \label{fig:3_7_8}
\end{figure}

\begin{remark}
    Let $S$ be a numerical semigroup with multiplicity three in which $\preceq$ does not equal $\subseteq$. Then, by \cite[Proposition 12]{m-3-poni}, $\{4,5\}\cap S=\emptyset$. Thus, $T=\langle 3,7,8\rangle= \{0,3\}\cup (6+\mathbb{N})$ is an oversemigroup of $S$. Let $D=\{T,\{0,4\}+T,\{0,4,5\}+T,\{0,1,5\}+T,\{0,1\}+T\}$ (see Figure~\ref{fig:3_7_8}). Then, $D$ is a sublattice of $\Ni[T]$ isomorphic to a pentagon, and consequently neither $\Ni[T]$ nor $\Ni$ are distributive.
\end{remark}

\section{Irreducibility}\label{sec:irred}

Let $S$ be a numerical semigroup. %Recall that $(\Ni,\preceq)$ is a lattice if and only if  $m(S)\leq 4$.
As $\Ni$ has finitely many elements, if  $(\Ni,\preceq)$ is lattice (by Corollary \ref{cor: reticulo iff m menorigual 4} this is the case if and only if $m(S)\leq 4$), then any of its elements is a join of $\vee$-irreducible elements, and also a meet of $\wedge$-irreducible elements.

Observe that $(\Ni,\subseteq)$ is a lattice where meet equals intersection of ideals and join is the union of ideals. Thus, it also makes sense to think about $\cap$- and $\cup$-irreducible ideals.

An ideal $I\in \Ni\setminus \{S\}$ is $+$-\emph{irreducible} if $I = J + K$ implies $I = J$ or $I = K$. In \cite{apery-icm} these ideals are called irreducible. We use the $+$ prefix to distinguish them from $\vee$-irreducibles, $\cap$-irreducibles, $\cup$-irreducibles and $\wedge$-irreducibles.

Define $\mathfrak{P}_{0} (S) = \{\{0,g\} + S :  g\text{ is a gap of } S\}$. In light of \cite[Proposition~3.1]{b-k}, we know that if $I$ is in $\mathfrak{P}_0(S)$, then $I$ is $+$-irreducible. The converse does not hold (see for instance \cite[Example~3.7]{b-k} or \cite[Example~5.3]{apery-icm}). 

\begin{remark}\label{rem:union-irreducible-principal}
    Let $S$ be a numerical semigroup. By Proposition~\ref{prop:irreducible-unique-cover}, $I$ is $\cup$-irreducible if and only if $I$ is the cover of at most one ideal in $\Ni$. By \cite[Lemma~11]{isom-icm}, the number of ideals covered by $I$ equals the number of non-zero minimal generators of $I$. Thus, $I$ is $\cup$-irreducible if and only if $I\in \mathfrak{P}_0(S)\cup\{S\}$.
\end{remark}

Next, we see that, in our context, $+$-irreducible implies $\vee$-irreducible.

\begin{lemma}\label{lem:sum-irr-join-irred}
    Let $S$ be a numerical semigroup such that $m(S)\leq 4$ and let $I\in\Ni$. If $I$ is $+$-irreducible, then it is $\vee$-irreducible.
\end{lemma}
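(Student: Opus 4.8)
The plan is to reduce the statement to a counting of covers and then exploit the interaction between $+$ and $\preceq$. Since $m(S)\le 4$, Corollary~\ref{cor: reticulo iff m menorigual 4} guarantees that $(\Ni,\preceq)$ is a finite lattice, so Proposition~\ref{prop:irreducible-unique-cover} is available: $I$ is $\vee$-irreducible if and only if $I$ covers at most one element. Hence I would argue by contradiction, assuming that $I$ covers two distinct elements $J_1,J_2\in\Ni$, and aim to produce a nontrivial $+$-factorization of $I$ (or force $J_1=J_2$).

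The first step is to record what $+$-irreducibility says about each covered element. Since $J_k\prec I$, there is some $L\in\Ni$ with $J_k+L=I$; because $I$ is $+$-irreducible and $J_k\neq I$, this forces $L=I$, so $J_k+I=I$ for $k=1,2$. The second, central step is to locate $P:=J_1+J_2$ inside the interval $[J_k,I]$. On the one hand, $J_k\preceq P$ with the other summand as a witness. On the other hand, taking a witness $M\in\Ni$ for $J_2\preceq I$ (so $J_2+M=I$) and using $J_1+I=I$, one computes $(J_1+J_2)+M=J_1+(J_2+M)=J_1+I=I$, which shows $P\preceq I$; symmetrically $J_2\preceq P\preceq I$. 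Thus $J_k\preceq P\preceq I$ for each $k$, and since $I$ covers both $J_1$ and $J_2$, the single element $P$ must lie in $\{J_1,I\}$ and also in $\{J_2,I\}$. If $P=I$, then $I=J_1+J_2$ with both factors distinct from $I$, contradicting $+$-irreducibility; otherwise $P=J_1$ and $P=J_2$, forcing $J_1=J_2$, a contradiction. Either way $I$ covers at most one element, and the proof concludes by Proposition~\ref{prop:irreducible-unique-cover}.

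The main obstacle is to resist the natural but false shortcut of identifying joins with sums: one might hope to show that $I=J\vee K$ directly yields a factorization $I=J+K$, but Example~\ref{ex:join-not-sum-nor-union} shows that $I\vee J$ is in general neither $I+J$ nor $I\cup J$, so $J_1+J_2$ could \emph{a priori} exceed $I$. The argument above circumvents this precisely by never comparing $P$ with a join: instead it sandwiches $P$ strictly via the covering relation, and the hypotheses $J_k+I=I$ (coming from $+$-irreducibility) are exactly what is needed to push $P$ below $I$. Note that the hypothesis $m(S)\le 4$ is used only to ensure, through Corollary~\ref{cor: reticulo iff m menorigual 4}, that $(\Ni,\preceq)$ is a lattice, so that the cover criterion of Proposition~\ref{prop:irreducible-unique-cover} can be invoked; the remainder of the reasoning is purely order-theoretic and uses only the definitions of $\preceq$ and of $+$-irreducibility.
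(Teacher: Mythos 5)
Your proof is correct, and it reaches the conclusion by a somewhat different order-theoretic route than the paper. The shared core is identical: from $+$-irreducibility and $J\prec I$ you extract $J+I=I$, and from this you deduce $J_1+J_2\preceq I$ while $J_k\preceq J_1+J_2$ is witnessed by the other summand. Where you diverge is in how this sandwich is exploited. The paper assumes directly that $I=J\vee K$ with $J,K\neq I$ and closes the argument by antisymmetry: $J+K\preceq I$ as above, and $I=J\vee K\preceq J+K$ because $J+K$ is an upper bound of $\{J,K\}$ and the join is the least one; hence $I=J+K$, contradicting $+$-irreducibility. You instead pass through Proposition~\ref{prop:irreducible-unique-cover}, assume $I$ covers two distinct elements $J_1,J_2$ with respect to $\preceq$, and use the covering relation to force $J_1+J_2\in\{J_1,I\}\cap\{J_2,I\}$, so that either $I=J_1+J_2$ (contradicting $+$-irreducibility) or $J_1=J_2$. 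Both arguments use the lattice hypothesis --- the paper to guarantee that $J\vee K$ exists and is a \emph{least} upper bound, you to invoke the cover characterization of $\vee$-irreducibility (whose proof itself relies on meets). The paper's version is slightly more direct, since it never needs to identify covered elements; yours has the small advantage of making explicit that the only possible obstructions to $\vee$-irreducibility sit immediately below $I$, and your closing remark correctly identifies why one cannot simply equate $J\vee K$ with $J+K$ in general. No gaps.
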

\begin{proof}
Suppose that $I\in\Ni$ is $+$-irreducible and, to the contrary, that $I=J\vee K$ with $J,K\in \Ni\setminus\{I\}$ (observe that $J\vee K$ exists as $(\Ni,\preceq)$ is a lattice by Corollary \ref{cor: reticulo iff m menorigual 4}). % and $J\neq I\neq K$\pedro{"and $J\neq I\neq K$" no hace falta, ya hemos dicho que $J,K\in \Ni\setminus\{I\}$}. 
As $J\preceq J\vee K=I$ and $K\preceq J\vee K=I$, there exists $N,M\in \Ni$ such that $I=J+N=K+M$. By taking into account that $I$ is $+$-irreducible and $J\neq I\neq K$, we deduce that $I=N=M$, and consequently $I=J+I=K+I$. Thus, $J+K+I=J+I=I$, and so $J+K\preceq I$. Also, $I=J\vee K\preceq J+K$, since $J+K$ is an upper bound for $\{J,K\}$ with respect to $\preceq$, and thus $J+K\preceq I\preceq J+K$, yielding $I=J+K$, contradicting that $I$ is $+$-irreducible. 
\end{proof}

Hence, for every $I\in \Ni$, $I\in \mathfrak{P}_0(S)$ implies $I$ is $+$-irreducible, which in turn forces $I$ to be $\vee$-irreducible (for $m(S)\leq 4$). As we mentioned above, the converse of the first implication does not hold. The following example illustrates that $\vee$-irreducibility does not imply $+$-irreducibility, and thus none of these implications can be reversed in general.

\begin{example}
    There are ideals that are $\vee$-irreducible but are not $+$-irreducible. Consider, for instance, $S=
    \langle 4,7,9\rangle$, and let $I=\{0,1,2\}+S$. Then, $I=(\{0,1\}+S)+(\{0,1\}+S)$ is $\vee$-irreducible. 
\end{example}

For multiplicity three, the above implications can be reversed.

\begin{proposition}\label{prop: vee-irred, irred y principales coinciden}
Let $S$ be a numerical semigroup with multiplicity three and $I\in\Ni\setminus S $. Then, the following are equivalent: 
\begin{enumerate}
    \item $I$ is $+$-irreducible.
    \item $I$ is $\vee$-irreducible;
    \item $I$ is $\cup$-irreducible;
    \item $I\in \mathfrak{P}_{0} (S)$.
    \end{enumerate}
\end{proposition}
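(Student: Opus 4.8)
The plan is to prove the four conditions equivalent by closing a short cycle of implications, most of which are already at hand. The equivalence of (3) and (4) is immediate from Remark~\ref{rem:union-irreducible-principal}: that remark says $I$ is $\cup$-irreducible exactly when $I\in\mathfrak{P}_0(S)\cup\{S\}$, and since we assume $I\neq S$ this reads $I\in\mathfrak{P}_0(S)$. The implication (4)$\Rightarrow$(1) is the content of \cite[Proposition~3.1]{b-k} recalled before Lemma~\ref{lem:sum-irr-join-irred} (members of $\mathfrak{P}_0(S)$ are $+$-irreducible), and (1)$\Rightarrow$(2) is exactly Lemma~\ref{lem:sum-irr-join-irred}, which applies since $m(S)=3\le 4$. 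Thus the only implication left to establish is (2)$\Rightarrow$(4), and this is where the hypothesis $m(S)=3$ must really be used.

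I would prove (2)$\Rightarrow$(4) by contraposition: assuming $I\in\Ni\setminus(\mathfrak{P}_0(S)\cup\{S\})$, I will exhibit a decomposition $I=J\vee K$ with $J,K\neq I$, so that $I$ is not $\vee$-irreducible. First I would pin down the structure of such $I$. Since the minimal generators of an ideal are pairwise incongruent modulo $m=3$, and no nonzero minimal generator can lie in the class of $0$ (it would be a positive multiple of $3$, hence equal to $0$ plus an element of $S$), the ideal $I$ has at most one minimal generator in each of the classes $1$ and $2$. Having none makes $I=S$, and having exactly one puts $I\in\mathfrak{P}_0(S)$; hence $I\notin\mathfrak{P}_0(S)\cup\{S\}$ forces $I$ to have exactly two nonzero minimal generators $w_1\equiv 1$ and $w_2\equiv 2\pmod 3$, with $I=\{0,w_1,w_2\}+S$. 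Being nonzero minimal generators, $w_1$ and $w_2$ cannot lie in $S$, so they are gaps and the principal-type ideals $J=\{0,w_1\}+S$ and $K=\{0,w_2\}+S$ both lie in $\mathfrak{P}_0(S)$; in particular each is distinct from $I$.

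The key arithmetic observation is that $w_1+w_2\equiv 0\pmod 3$ is a positive multiple of the multiplicity $3\in S$, hence $w_1+w_2\in S$. Consequently $J+K=\{0,w_1,w_2,w_1+w_2\}+S=\{0,w_1,w_2\}+S=I$, the extra generator $w_1+w_2$ being redundant. Since $J\preceq J+K=I$ and $K\preceq J+K=I$ (adding $K$, respectively $J$), the ideal $I$ is an upper bound of $\{J,K\}$, so $J\vee K\preceq I$ and therefore $J\vee K\subseteq I$ by Remark~\ref{rem:subset-contained-prec}. Conversely, $J\preceq J\vee K$ and $K\preceq J\vee K$ give $J\cup K\subseteq J\vee K$, and $J\cup K=(\{0,w_1\}\cup\{0,w_2\})+S=I$; hence $I\subseteq J\vee K$. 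The two inclusions yield $J\vee K=I$, so $I$ is not $\vee$-irreducible, completing the cycle (1)$\Rightarrow$(2)$\Rightarrow$(4)$\Rightarrow$(1) together with (3)$\Leftrightarrow$(4).

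The step I expect to require the most care is the choice of the decomposing ideals. The naive candidates $I\setminus\{w_1\}$ and $I\setminus\{w_2\}$ (the covers of $I$ in $(\Ni,\subseteq)$) need not be $\preceq I$, so they cannot be used directly; it is essential to take the principal-type ideals $\{0,w_i\}+S$ instead, and to exploit that the multiplicity equals $3$ so that $w_1+w_2$ is a positive multiple of the multiplicity and thus lands in $S$. The remaining delicate point is the passage from ``the sum equals $I$'' to ``the join equals $I$'', which I would handle by squeezing $J\vee K$ between $I$ and itself via Remark~\ref{rem:subset-contained-prec}, that is, using the fact that $\preceq$ refines $\subseteq$.
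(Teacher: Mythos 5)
Your proof is correct, and its skeleton --- the cycle (3)$\Leftrightarrow$(4), (4)$\Rightarrow$(1), (1)$\Rightarrow$(2), (2)$\Rightarrow$(4), with the last implication proved by decomposing $I=\{0,g_1,g_2\}+S$ as the join of the two principal-type ideals $J=\{0,g_1\}+S$ and $K=\{0,g_2\}+S$ and using that $g_1+g_2$ is a positive multiple of $3$, hence lies in $S$ --- is exactly the paper's. The one place you genuinely diverge is in identifying $J\vee K$ with $I$: the paper first checks that $J$ and $K$ are incomparable (via \cite[Lemma~8]{isom-icm} and the fact that neither $g_1-g_2$ nor $g_2-g_1$ lies in $S$) and then invokes \cite[Theorem~20]{m-3-poni}, which says that for incomparable ideals in multiplicity three the join equals the sum. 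You avoid both of these external inputs with a self-contained squeeze: $J+K=I$ shows $I$ is an upper bound of $\{J,K\}$, so $J\vee K\preceq I$ and hence $J\vee K\subseteq I$ by Remark~\ref{rem:subset-contained-prec}, while $I=J\cup K\subseteq J\vee K$ since $\preceq$ refines $\subseteq$. This is a mild but real simplification --- it needs only the elementary compatibility of $\preceq$ with $\subseteq$ rather than the structural description of joins in multiplicity three --- and it also spares you from verifying incomparability, which your structural analysis of the minimal generators (at most one in each nonzero class modulo $3$) would otherwise have to supply. The rest of your argument, including the reduction to exactly two nonzero minimal generators in classes $1$ and $2$, is sound.
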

\begin{proof}
We already know that the first assertion implies the second (Lemma~\ref{lem:sum-irr-join-irred}), the fourth implies the first (by \cite[Proposition~3.1]{b-k}), and the third is equivalent to the fourth (see Remark~\ref{rem:union-irreducible-principal}). Thus, it suffices to prove that the second statement implies the fourth. 

Suppose that $I\not\in\mathfrak{P}_0(S)$. Since $S$ has multiplicity three, $I = \{0, g_1 , g_2\} + S$, for some $g_1$, $g_2$ gaps of $S$, such that neither $g_2-g_1\in S$ nor $g_1-g_2\in S$ (otherwise, $I=\{0,g_1\}+S$ or $I=\{0,g_2\}+S$, leading to $I\in \mathfrak{P}_0(S)$). Consider $J = \{0, g_1\} + S$ and $K = \{0, g_2\} + S$. By \cite[Lemma 8]{isom-icm} and using the fact that $\{g_2-g_1,g_2-g_1\}\cap S=\emptyset$,  we deduce that $J\nsubseteq K$ and $K\nsubseteq J$. Therefore, $J$ and $K$ are incomparable elements in the lattice $(\Ni,\preceq)$ and so, by \cite[Theorem~20]{m-3-poni}, $J\vee K = J+ K = \{0, g_1, g_2, g_1 + g_2\} + S$. Notice that $\{0\bmod 3,g_1\bmod 3,g_2\bmod 3\}=\{0,1,2\}$, and so $g_1+g_2$ is either congruent with $0$, $g_1$ or $g_2$ modulo three. This implies that $\{0,g_1,g_2,g_1+g_2\}+S=\{0,g_1,g_2\}+S=I$, showing that $I=J+K$ and thus $I$ is not $\vee$-irreducible. 
\end{proof}

From the proof of this last result we derive the following consequence.

\begin{corollary}
Let $S$ be a numerical semigroup with multiplicity three and let $I\in \Ni$. Then, $I$ is either $+$-irreducible or there exist two $+$-irreducible ideals $J$ and $K$ such that $I = J\vee K$.
\end{corollary}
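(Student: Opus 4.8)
The plan is to extract the explicit factorization that already appears inside the proof of Proposition~\ref{prop: vee-irred, irred y principales coinciden}. First I would dispose of the trivial case in which $I$ is itself $+$-irreducible, where there is nothing to prove. The element $I=S$ is excluded by the very definition of $+$-irreducibility and cannot be written as a join of two $+$-irreducibles (its only predecessors under $\preceq$ are $S$ itself), so the substantive statement concerns $I\in\Ni\setminus\{S\}$, and I would treat it under that reading.

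So assume $I\in\Ni\setminus\{S\}$ is not $+$-irreducible. By Proposition~\ref{prop: vee-irred, irred y principales coinciden}, for $\operatorname{m}(S)=3$ the notions of $+$-, $\vee$- and $\cup$-irreducibility all coincide with membership in $\mathfrak{P}_0(S)$; hence $I\notin\mathfrak{P}_0(S)$. Since the minimal generating set of an ideal has at most $\operatorname{m}(S)=3$ elements, and any ideal with a single nonzero minimal generator lies in $\mathfrak{P}_0(S)$ while $I=S$ has none, the hypotheses $I\ne S$ and $I\notin\mathfrak{P}_0(S)$ force the minimal generating set of $I$ to consist of $0$ and exactly two gaps. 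Thus I can write $I=\{0,g_1,g_2\}+S$; because no two minimal generators are congruent modulo $3$, the residues of $0,g_1,g_2$ exhaust $\{0,1,2\}$, and because $0,g_1,g_2$ are all minimal generators, $g_1-g_2\notin S$ and $g_2-g_1\notin S$.

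Next I would set $J=\{0,g_1\}+S$ and $K=\{0,g_2\}+S$. Both lie in $\mathfrak{P}_0(S)$ and are therefore $+$-irreducible by \cite[Proposition~3.1]{b-k}. The decisive computation is the one already carried out in the proof of Proposition~\ref{prop: vee-irred, irred y principales coinciden}: the condition $\{g_1-g_2,g_2-g_1\}\cap S=\emptyset$ makes $J$ and $K$ incomparable, so \cite[Theorem~20]{m-3-poni} gives $J\vee K=J+K=\{0,g_1,g_2,g_1+g_2\}+S$; and since $g_1+g_2$ is congruent modulo $3$ to one of $0,g_1,g_2$, it is a redundant generator, whence $J\vee K=\{0,g_1,g_2\}+S=I$. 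This produces the desired expression $I=J\vee K$ with $J,K$ both $+$-irreducible.

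The only genuinely delicate point is the redundancy of $g_1+g_2$, which rests on the pigeonhole observation that only three residue classes are available modulo the multiplicity $3$; everything else is bookkeeping already justified in the proposition. Because the construction is literally the one appearing in the proof of Proposition~\ref{prop: vee-irred, irred y principales coinciden}, the corollary follows immediately.
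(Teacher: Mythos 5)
Your proof is correct and is essentially the paper's own argument: the paper derives this corollary directly from the proof of Proposition~\ref{prop: vee-irred, irred y principales coinciden}, namely the factorization $I=(\{0,g_1\}+S)\vee(\{0,g_2\}+S)$ with each $\{0,g_i\}+S\in\mathfrak{P}_0(S)$ and hence $+$-irreducible, which is exactly what you extract. Your explicit treatment of the edge case $I=S$ is a sensible reading of a hypothesis that the corollary's statement leaves implicit.
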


Let $S$ be a numerical semigroup and let $I\in \Ni$. The \emph{Frobenius number} of $I$ can be defined, exactly as for numerical semigroups, as $\operatorname{F}(I)=\max( \mathbb{Z}\setminus I)$. 

\begin{lemma}\label{lem:IcupFI}
    Let $S$ be a numerical semigroup and let $I\in \Ni\setminus\mathbb{N}$. Then, $\bar{I}=I\cup \{\operatorname{F}(I)\}\in \Ni$.
\end{lemma}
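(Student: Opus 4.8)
The plan is to show that adding the single element $\operatorname{F}(I)$ to $I$ produces a set that is still an ideal and is still normalized, so that $\bar I\in\Ni$. Recall that membership in $\Ni$ means $\bar I$ is a relative ideal of $S$ with $\min(\bar I)=0$. Since $I\in\Ni$ we already have $\min(I)=0$, and $\operatorname{F}(I)=\max(\mathbb{Z}\setminus I)>0$ (it is positive because $0\in I$ and $I\neq\mathbb{N}$, so $I$ has at least one gap, necessarily positive); hence $\min(\bar I)=\min(I)=0$, settling the normalization condition immediately. Thus the whole content is to verify that $\bar I=I\cup\{\operatorname{F}(I)\}$ satisfies $\bar I+S=\bar I$, i.e. that $\bar I$ absorbs addition by every element of $S$.

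I would work with Apéry sets / Kunz coordinates, since the earlier machinery is tailored to them. Let $m=\operatorname{m}(S)$ and write $I=(x_1,\dots,x_{m-1})_{\mathcal K}$, with Apéry set $\Ap(I)=\{w_0,\dots,w_{m-1}\}$, where $w_j=\min(I\cap(j+m\mathbb{N}))=mx_j+j$. The key observation is that $\operatorname{F}(I)$, the largest integer missing from $I$, lies in some congruence class $i \bmod m$, and in that class the largest gap is exactly $w_i-m=m x_i+i-m=m(x_i-1)+i$; so $\operatorname{F}(I)=w_i-m$ for the class $i$ maximizing $w_i-m$ (equivalently, one checks $\operatorname{F}(I)+m=w_i\in I$ and $\operatorname{F}(I)\notin I$, so $\operatorname{F}(I)$ is the predecessor of the Apéry generator $w_i$ in its class). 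Consequently $\bar I=I\cup\{w_i-m\}$ has, in class $i$, new minimum $w_i-m$, i.e. its $i$th Kunz coordinate drops from $x_i$ to $x_i-1$, while all other coordinates are unchanged: $\bar I=(x_1,\dots,x_{i-1},x_i-1,x_{i+1},\dots,x_{m-1})_{\mathcal K}$.

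With this description the verification reduces to checking the Kunz inequalities \eqref{eq:kunz-inequalities-general} for the lowered tuple, or equivalently to checking directly that $\bar I+S=\bar I$. The direct route: one must show that for every $s\in S^*$ and the new point $f=\operatorname{F}(I)$ we have $f+s\in\bar I$. If $f+s\in I$ there is nothing to prove, so suppose $f+s\notin I$; since $f$ is the \emph{largest} gap of $I$ and $s>0$ forces $f+s>f$, we would get $f+s$ a gap of $I$ strictly larger than $\operatorname{F}(I)$, contradicting maximality — unless $f+s=f$, impossible for $s>0$. Hence $f+s\in I\subseteq\bar I$ for all $s\in S^*$, and trivially $f+0=f\in\bar I$; combined with $I+S=I$ this gives $\bar I+S\subseteq\bar I$, and $\bar I\subseteq\bar I+S$ holds because $0\in S$. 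Therefore $\bar I$ is an ideal, and being normalized it lies in $\Ni$.

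The main obstacle, and the only point requiring care, is the claim that $\operatorname{F}(I)$ is genuinely the top gap so that $f+s$ cannot be a gap for $s\in S^*$: this is exactly the definition $\operatorname{F}(I)=\max(\mathbb{Z}\setminus I)$, so the maximality argument is clean and no serious computation is needed. If instead one prefers the Kunz-coordinate formulation, the obstacle becomes verifying that decreasing $x_i$ by one keeps the inequalities $x_i\le k_i$ and the second family of inequalities valid; monotonicity makes the first family only easier, and for the second family the decreased coordinate appears on the \emph{right}-hand side (as $x_{(i+j)\bmod m}$) at most, where lowering it relaxes the constraint, while on the left a decrease must be checked not to violate any bound — but since $f+m=w_i\in I$ certifies that the new generator is still of the required minimality, the inequalities are preserved. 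I would present the direct maximality argument as the cleaner proof.
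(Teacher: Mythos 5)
Your proof is correct and its core argument — that $\operatorname{F}(I)+s>\operatorname{F}(I)$ for $s\in S\setminus\{0\}$ forces $\operatorname{F}(I)+s\in I$ by maximality of the Frobenius number, while $\operatorname{F}(I)+0\in\bar{I}$ trivially — is exactly the paper's proof. The Kunz-coordinate detour is unnecessary (as you yourself conclude), and the rest matches the paper.
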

\begin{proof}
    We start by proving that $\bar{I}\in \Ni$, that is, $\bar{I}+S=\bar{I}$ and $0\in \bar{I}$. The second condition follows easily since $0\in I\subset \bar{I}$. As $I+S=I$, in order to prove that $\bar{I}+S=\bar{I}$, it suffices to show that $\operatorname{F}(I)+S\subseteq \bar{I}$. Clearly, $\operatorname{F}(I)+0=\operatorname{F}(I)\in \bar{I}$, and for every $s\in S\setminus \{0\}$, $\operatorname{F}(I)+s>\operatorname{F}(I)$, and thus $\operatorname{F}(I)+s\in I\subseteq \bar{I}$. 
\end{proof}

\begin{lemma}\label{lem:unique-cover-inclusion}
    Let $S$ be a numerical semigroup and let $I\in \Ni$ with a unique cover with respect to inclusion. For every $J\in \Ni\setminus\{I\}$ , if  $I\preceq J$, then $\bar{I}\preceq J$, where $\bar{I}=I\cup \{\operatorname{F}(I)\}$.
\end{lemma}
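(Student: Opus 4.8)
The plan is to first identify the unique inclusion-cover of $I$ explicitly as $\bar{I}$, and then to upgrade the containment $\bar{I}\subseteq J$ to the stronger relation $\bar{I}\preceq J$ by reusing the element of $\Ni$ that witnesses $I\preceq J$. Note that since $I$ has a cover it is not the top element, so $I\neq\mathbb{N}$ and $\operatorname{F}(I)$ is defined, making $\bar{I}$ well defined via Lemma~\ref{lem:IcupFI}.

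First I would observe that $\bar{I}=I\cup\{\operatorname{F}(I)\}$ is always a cover of $I$ in the poset $(\Ni,\subseteq)$: by Lemma~\ref{lem:IcupFI} it lies in $\Ni$, and since $\bar{I}\setminus I=\{\operatorname{F}(I)\}$ is a singleton, no set, and a fortiori no element of $\Ni$, can sit strictly between $I$ and $\bar{I}$. Hence, under the hypothesis that $I$ has a unique inclusion-cover, that cover must be $\bar{I}$. Now let $J\in\Ni\setminus\{I\}$ satisfy $I\preceq J$. By Remark~\ref{rem:subset-contained-prec} we obtain $I\subseteq J$, and since $J\neq I$ this is strict, $I\subsetneq J$. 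Using finiteness of $(\Ni,\subseteq)$, I would pick a minimal element $K$ of $\{L\in\Ni : I\subsetneq L\subseteq J\}$; such a $K$ is necessarily a cover of $I$ contained in $J$, so by uniqueness $K=\bar{I}$, giving $\bar{I}\subseteq J$, and in particular $\operatorname{F}(I)\in J$.

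Finally, to pass from $\subseteq$ to $\preceq$, I would take $L\in\Ni$ with $I+L=J$ (which exists since $I\preceq J$) and show that the same $L$ witnesses $\bar{I}\preceq J$. Expanding, $\bar{I}+L=(I+L)\cup(\operatorname{F}(I)+L)=J\cup(\operatorname{F}(I)+L)$, so it suffices to check $\operatorname{F}(I)+L\subseteq J$. For $l\in L$ with $l=0$ (recall $0\in L$, as $L$ is normalized) we get $\operatorname{F}(I)\in J$, already established; for $l>0$ we have $\operatorname{F}(I)+l>\operatorname{F}(I)=\max(\mathbb{Z}\setminus I)$, whence $\operatorname{F}(I)+l\in I\subseteq J$. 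Thus $\bar{I}+L=J$, that is, $\bar{I}\preceq J$.

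The only place the unique-cover hypothesis enters is in securing $\operatorname{F}(I)\in J$, equivalently $\bar{I}\subseteq J$; everything else is a direct computation with the additive structure on ideals together with the definition of $\operatorname{F}(I)$. I therefore expect that containment, routed through the cover argument, to be the single genuinely substantive step, while the final verification $\operatorname{F}(I)+L\subseteq J$ is routine.
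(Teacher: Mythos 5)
Your proof is correct and follows essentially the same route as the paper: identify $\bar{I}$ as the unique inclusion-cover, deduce $\operatorname{F}(I)\in J$ from $I\subsetneq J$, and then verify that the same witness $L$ with $I+L=J$ also satisfies $\bar{I}+L=J$ by the case split $l=0$ versus $l>0$. Your explicit minimal-element argument for $\bar{I}\subseteq J$ just spells out a step the paper leaves implicit.
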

\begin{proof}    
    Notice that as $I$ has a unique cover (with respect to inclusion) and $\bar{I}=I\cup\{\operatorname{F}(I)\}$ covers $I$, we deduce that  $\bar{I}$ is precisely the unique cover of $I$.
    
    Suppose that $I\prec J$ for some $J\in \Ni$.  As $I\subseteq J$, and $I\neq J$, $\bar{I}\subseteq J$, and in particular $\operatorname{F}(I)\in J$.
    From $I\preceq J$, we deduce that there exists $K\in \Ni$ such that $I+K=J$. Let us show that $\bar{I}+K=J$. Clearly, $J=I+K\subseteq \bar{I}+K$. For the other inclusion, take $k\in K$ and $i\in \bar{I}$. If $i\in I$, then $i+k\in I+K=J$. If $i=\operatorname{F}(I)$ and $k=0$, then $i+k=\operatorname{F}(I)$, which we already know belongs to $J$. Finally, for $i=\operatorname{F}(I)$ and $k\in K\setminus\{0\}$, we get that $i+k>\operatorname{F}(I)$, and so $i+k\in I\subseteq J$.
\end{proof}

\begin{proposition}
    Let $S$ be a numerical semigroup such that $(\Ni,\preceq)$ is a lattice, and let $I\in \Ni$. If $I$ is $\cap$-irreducible, then $I$ is $\wedge$-irreducible.
\end{proposition}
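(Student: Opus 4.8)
The plan is to prove the contrapositive: assuming $I$ is not $\wedge$-irreducible, I will show that $I$ is not $\cap$-irreducible. By Proposition~\ref{prop:irreducible-unique-cover}, an element of a finite lattice fails to be $\wedge$-irreducible precisely when it has at least two covers (with respect to $\preceq$). Similarly, by the same proposition applied to the lattice $(\Ni,\subseteq)$, the ideal $I$ is $\cap$-irreducible if and only if it has at most one cover with respect to inclusion $\subseteq$. So the goal reduces to showing: if $I$ has two distinct covers with respect to $\preceq$, then $I$ has at least two distinct covers with respect to $\subseteq$.

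The central idea is to use Lemma~\ref{lem:unique-cover-inclusion}, which I would apply in its contrapositive form. Suppose, for contradiction, that $I$ has a unique cover with respect to inclusion; call it $\bar I=I\cup\{\operatorname{F}(I)\}$ as in Lemma~\ref{lem:IcupFI}. Lemma~\ref{lem:unique-cover-inclusion} then tells me that every $J\in\Ni$ with $I\prec J$ satisfies $\bar I\preceq J$. The key consequence I want to extract is that $\bar I$ is then the unique cover of $I$ with respect to $\preceq$ as well. Indeed, $\bar I$ is a candidate cover since $\operatorname{F}(I)+S\subseteq \bar I$ forces $\bar I$ to be obtainable as $I+(\{0,\operatorname{F}(I)\}+S)$ (so $I\preceq\bar I$), and any $J$ strictly above $I$ in $\preceq$ already lies above $\bar I$, ruling out any cover of $I$ distinct from $\bar I$.

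First I would fix the setup: $I\in\Ni$, assume $I$ is $\cap$-irreducible, and aim to show $I$ is $\wedge$-irreducible. Since $(\Ni,\subseteq)$ is the Boolean-type lattice described earlier with meet $=\cap$, Proposition~\ref{prop:irreducible-unique-cover} gives that $I$ being $\cap$-irreducible is equivalent to $I$ having at most one cover under $\subseteq$. The degenerate case where $I$ has no cover under $\subseteq$ means $I=\mathbb{N}$, the top element, which is trivially $\wedge$-irreducible (it has no cover under $\preceq$ either). In the main case, $I\neq\mathbb{N}$ has exactly one inclusion-cover $\bar I=I\cup\{\operatorname{F}(I)\}$ by Lemma~\ref{lem:IcupFI}. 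Then I invoke Lemma~\ref{lem:unique-cover-inclusion}: every $J$ with $I\prec J$ has $\bar I\preceq J$, so $I$ has a unique $\preceq$-cover, whence $I$ is $\wedge$-irreducible by Proposition~\ref{prop:irreducible-unique-cover}.

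The main obstacle I anticipate is the careful verification that having a unique cover under $\subseteq$ really does yield a unique cover under $\preceq$, since the two order relations differ and $\preceq$ is a priori coarser (Remark~\ref{rem:subset-contained-prec} shows $I\preceq J\Rightarrow I\subseteq J$, but not conversely). The subtle point is that a $\preceq$-cover $C$ of $I$ satisfies $I\prec C$, hence $\bar I\preceq C$ by the lemma; combined with $I\preceq\bar I$ and the cover property $I\lessdot C$, this forces $C=\bar I$. I would make sure $I\preceq\bar I$ holds in all cases — this is where I check that $\bar I=I+(\{0,\operatorname{F}(I)\}+S)$, so that $\{0,\operatorname{F}(I)\}+S$ serves as the witnessing normalized ideal, using that $\operatorname{F}(I)+S\subseteq\bar I$ and that $\{0,\operatorname{F}(I)\}+S\in\Ni$.
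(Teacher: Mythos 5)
Your proof is correct and follows essentially the same route as the paper: both hinge on Lemma~\ref{lem:IcupFI} and Lemma~\ref{lem:unique-cover-inclusion} to show that the unique inclusion-cover $\bar{I}=I\cup\{\operatorname{F}(I)\}$ lies $\preceq$-below every $J$ with $I\prec J$. The paper concludes slightly more directly by contradicting $I=J\wedge K$ (from $\bar{I}\preceq J$ and $\bar{I}\preceq K$ one gets $\bar{I}\preceq J\wedge K=I$, hence $\bar{I}\subseteq I$), whereas you pass through the $\preceq$-cover characterization of Proposition~\ref{prop:irreducible-unique-cover}, which is why you need the extra---and correctly verified---fact that $I\preceq\bar{I}$ via the witness $\{0,\operatorname{F}(I)\}+S$.
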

\begin{proof}
    As $I$ is $\cap$-irreducible, we know by Proposition~\ref{prop:irreducible-unique-cover} that $I$ has a unique cover, which by Lemma~\ref{lem:IcupFI} must be $\bar{I}=I\cup\{\operatorname{F}(I)\}$.
Suppose that $I=J\wedge K$ with $J,K\in \Ni$. Then, $I\preceq J$ and $I\preceq K$, and by Lemma~\ref{lem:unique-cover-inclusion} we deduce that $\bar{I}\preceq J$ and $\bar{I}\preceq K$. But then $\bar{I}\preceq J\wedge K=I$, yielding $\bar{I}\subseteq I$, a contradiction.
\end{proof}

The other implication is not true as the following example shows.

\begin{example}
    Let $S=\langle 3,7\rangle$. Then, $I=\{0,4,8\}+S$ is $\wedge$-irreducible and it is not $\cap$-irreducible.
\end{example}

\section*{Acknowledgments}
The research was carried out thanks to the hospitality offered by the Institute of Mathematics (IMAG) of the University of Granada and the staff exchange program ERASMUS+ funded by the European Community.

S. Bonzio acknowledges the support by the Italian Ministry of Education, University and Research through the PRIN 2022 project DeKLA (``Developing Kleene Logics and their Applications'', project code: 2022SM4XC8) and the PRIN Pnrr project ``Quantum Models for Logic, Computation and Natural Processes (Qm4Np)'' (cod. P2022A52CR). He also acknowledges the Fondazione di Sardegna for the support received by the project MAPS (grant number F73C23001550007), the University of Cagliari for the support by the StartUp project ``GraphNet''. Finally, he gratefully acknowledges also the support of the INDAM GNSAGA (Gruppo Nazionale per le Strutture Algebriche, Geometriche e loro Applicazioni). \\
\noindent
P. A. García-Sánchez is partially supported by the grant number ProyExcel\_00868 (Proyecto de Excelencia de la Junta de Andalucía) and by the Junta de Andaluc\'ia Grant Number FQM--343. He also acknowledges financial support from the grant PID2022-138906NB-C21 funded by MICIU/AEI/10.13039/\bignumber{501100011033} and by ERDF ``A way of making Europe'', and from the Spanish Ministry of Science and Innovation (MICINN), through the ``Severo Ochoa and María de Maeztu Programme for Centres and Unities of Excellence'' (CEX2020-001105-M).


\begin{thebibliography}{9}

\bibitem{ns-app} A. Assi, M. D'Anna, P. A. García-Sánchez, Numerical semigroups and applications, Second edition, RSME Springer series \textbf{3}, Springer, Switzerland, 2020.


\bibitem{b-k} V. Barucci, F. Khouja, On the class semigroup of a numerical semigroup, Semigroup Forum, \textbf{92} (2016), 377--392. 

\bibitem{m-3-poni} S. Bonzio, P. A. García Sánchez, The poset of normalized ideals of numerical semigroups with multiplicity three, arXiv \href{https://arxiv.org/pdf/2407.21697}{2407.21697}.

\bibitem{apery-icm} L. Casabella, M. D'Anna, P. A. García-Sánchez, Apéry sets and the ideal class monoid of a numerical semigroup, Mediterr. J. Math. 21 (2024), Article No. 7 (28 pages). 

\bibitem{numericalsgps} M. Delgado, P. A. García-Sánchez, J. Morais, NumericalSgps, A package  for  numerical  semigroups,  Version 1.3.1 dev (2023), Refereed GAP package, \url{https://gap-packages.github.io/numericalsgps}.

\bibitem{gap} The GAP-Group, GAP -- Groups, Algorithms, and Programming, Version 4.12.2, (2022), \url{https://www.gap-system.org}.

\bibitem{isom-icm} P. A. Garc\'ia-S\'anchez, \textit{The isomorphism problem for ideal class monoids of numerical semigroups}, Semigroup Forum, 108, pages 365–376, 2024.

\bibitem{Li} S. Li, \textit{Counting numerical semigroups by Frobenius number, multiplicity, and depth}, Combinatorial Theory, 3(3), 2023.


\bibitem{Nation} J. N. Nation, \textit{Notes on Lattice Theory}, Lecture Notes, 2017.

%\bibitem{multiplicity-3-4} J. C. Rosales, Numerical semigroups with multiplicity three and four, Semigroup Forum \textbf{71} (2005), 323--331.

\bibitem{ns} J. C. Rosales y P. A. Garc\'ia-S\'anchez, Numerical semigroups, Developments in Mathematics, \textbf{20}, Springer, New York, 2009.

%\bibitem{Smith} D. Smith, \textit{$\wedge$-irreducible elements in implicative lattices}, Proceedings of the American Mathematical Society, 34(1), 1972.
\end{thebibliography}
\end{document}